\numberwithin{equation}{section} 
\newtheorem{thm}[equation]{Theorem} 
\newtheorem{cond}[equation]{Conditions}
\newtheorem{lemma}[equation]{Lemma} 
\newtheorem{example}[equation]{Example}
\newtheorem{remark}[equation]{Remark}
\newtheorem{remarks}[equation]{Remarks}
\newenvironment{ex}{\begin{example}\rm}{\end{example}}
\DeclareMathAlphabet{\mathpzc}{OT1}{pzc}{m}{it}
\DeclareMathOperator{\Tot}{Tot}
\DeclareMathOperator{\AW}{AW}
\DeclareMathOperator{\EZ}{EZ} 
\DeclareMathOperator{\HH}{HH}
\DeclareMathOperator{\Ext}{Ext}
\DeclareMathOperator{\Hom}{Hom}
\DeclareMathOperator{\Ker}{Ker}
\newcommand{\B}{\mathbb B}
\newcommand{\K}{\mathbb K}
\newcommand{\bu}{*}
\newcommand{\Z}{{\mathbb Z}}
\newcommand{\N}{{\mathbb N}}
\newcommand{\scl}{{\mathpzc{l}}}
\newcommand{\scr}{{\mathpzc{r}}}
\newcommand{\ot}{\otimes}
\newcommand{\ox}{\otimes}
\newcommand{\Wedge}{\textstyle\bigwedge}
\begin{document}

\title[Gerstenhaber brackets for twisted tensor products]
{Gerstenhaber brackets on Hochschild cohomology  of twisted tensor products}



\author{Lauren Grimley} 
\address{Department of Mathematics\\Texas A\&M University \\ College Station, TX 77843}
\email{lgrimley@math.tamu.edu} 

\author{Van C.\ Nguyen} 
\address{Department of Mathematics\\567 Lake Hall\\Northeastern University\\Boston, MA 02115}
\email{v.nguyen@neu.edu}

\author{Sarah Witherspoon}
\address{Department of Mathematics\\Texas A\&M University \\College Station, TX 77843}
\email{sjw@math.tamu.edu}

\thanks{All authors were supported by NSF grant DMS-1101399; the first and third authors 
were also supported by NSF grant DMS-1401016.}

\date{March 10, 2015}

\begin{abstract}
We construct the Gerstenhaber bracket on Hochschild cohomology of a twisted tensor product of
algebras, and, as examples, compute Gerstenhaber brackets for some quantum complete intersections
arising in work of Buchweitz, Green, Madsen, and Solberg.
We prove that a subalgebra of the Hochschild cohomology ring of a twisted tensor product,
on which the twisting is trivial, is isomorphic, as Gerstenhaber algebras, to the tensor
product of the respective subalgebras of the Hochschild cohomology rings of the factors. 
\end{abstract}

\maketitle


\section{Introduction}

The Hochschild cohomology $\HH^*(\Lambda)$ of an associative algebra $\Lambda$ has a cup product under which it is a graded commutative ring. In 1963, Gerstenhaber~\cite{G} introduced the bracket product $[\cdot , \cdot ]$ (or Gerstenhaber bracket) of degree $-1$, to give a second multiplicative structure on the Hochschild cohomology ring. Thus one combines the structures of a graded commutative algebra and a graded Lie algebra, to form what is generally called a Gerstenhaber algebra, of which the Hochschild cohomology ring is an example. Gerstenhaber showed~\cite{G2} that the bracket plays a role in the deformation theory of algebras. 

Recently, Le and Zhou~\cite{LZ} defined the tensor product of two Gerstenhaber algebras. They proved that, given algebras $R$ and $S$  over a field $k$, at least  one of which is finite dimensional, the Hochschild cohomology of the tensor product algebra  $R \ot_k S$ is isomorphic to the tensor product of the respective Hochschild cohomologies of $R$ and of $S$, as
Gerstenhaber algebras. 

In this paper, we work more generally in the twisted tensor product setting of Bergh and Oppermann~\cite{BO}. Let $R$ and $S$ be $k$-algebras graded by abelian groups $A$ and $B$ respectively, and consider $R \ot^t_k S$, where a twist $t$ is defined using the gradings of $R$ and of $S$
(see Section~\ref{Gbracket twisted} below). 
In the succeeding sections, we show the following main results:
\begin{enumerate}
 \item 
We construct the Gerstenhaber bracket on the Hochschild cohomology of $R \ot^t_k S$ in Section~\ref{Gbracket twisted}
by employing and augmenting techniques of Negron and the third author \cite{NW}.
In Section~\ref{sec:qci}, 
we apply this construction to compute brackets for the quantum complete intersection 
 $$\Lambda_q:= k \left< x,y \right>/(x^2, y^2, xy+qyx), \ q \in k^{\times} , $$ 
 which can be considered as a twisted tensor product $k[x]/(x^2) \ot^t_k k[y]/(y^2)$.
 We take advantage of the known algebra structure of $\HH^*(\Lambda_q)$, for various
values of $q$, as given by Buchweitz, Green, Madsen, and Solberg~\cite{BGMS}. 
 \item In Section~\ref{Galgiso}, we let $A'$ and $B'$ be subgroups of $A$ and $B$, respectively,
 on which the twisting $t$ is trivial (see (\ref{AprimeBprime})), and 
 show that the graded algebra isomorphism given by Bergh and Oppermann~\cite[Theorem 4.7]{BO}, namely  
  $$\HH^{*,A'\oplus B'} (R \ot^t_k S) \cong 
 \HH^{*, A'}(R) \ot   \HH^{*,B'}(S), $$ 
 is in fact an isomorphism of Gerstenhaber algebras. 
This generalizes the result of  Le and Zhou~\cite{LZ} to the twisted setting. Our proof relies on twisted versions of the Alexander-Whitney and Eilenberg-Zilber chain maps, and uses techniques from~\cite{NW}.
\end{enumerate}

Gerstenhaber brackets are in general difficult to compute. 
Our  results described in (1) above include a new class of examples
which moreover illustrate the techniques of \cite{NW}, showing that bracket
computations can be simplified by defining brackets directly on a
resolution other than the bar resolution. An advantage of these techniques is in 
eliminating the necessity of using explicit formulas for chain maps between
resolutions, which traditional approaches typically require. 
Our main theorem described in (2) above gives a way to compute brackets on a
subalgebra of the Hochschild cohomology of a twisted tensor product, saving 
time for some classes of examples. The statement and proof are quite general,
showing that while the techniques of \cite{NW} were primarily developed for
Koszul algebras, they can in fact be helpful for other algebras as well.

Throughout the article, $k$ is a field. 
All tensor products are taken over $k$ unless otherwise indicated.


\section{Preliminaries}\label{preliminaries}

In this section, we summarize and augment the results of \cite{NW} that we will need.
Let $\Lambda$ be a $k$-algebra and $\Lambda^e := \Lambda\ot\Lambda^{op}$ 
be its enveloping algebra, that is it has the tensor product algebra structure, where $\Lambda^{op}$ is
$\Lambda$ with the opposite multiplication. 
Then a left $\Lambda^e$-module is a $\Lambda$-bimodule, and vice versa.

As $k$ is a field, the {\em Hochschild cohomology} of $\Lambda$ is 
$$
   \HH^*(\Lambda) := \Ext^*_{\Lambda^e}(\Lambda,\Lambda).
$$
It is a {\em Gerstenhaber algebra}, that is, it is a graded commutative algebra via the cup product $\smile$, it is  a graded Lie algebra 
via the Lie bracket (or Gerstenhaber bracket) $[ \cdot , \cdot ]$, and it satisfies various conditions.
See, for example, \cite{G}.
We will not need the standard definition here.
Instead we will  recall a construction of these operations that will suit our purposes.
For this we will need the bar resolution $\B$ and a resolution $\K$
satisfying some properties ($\K = \B$ is one choice), which we introduce next. 

Let $\B =\B (\Lambda)$ denote the {\em bar resolution} of $\Lambda$, 
\begin{equation}\label{bar-res}
    \cdots \stackrel{\delta_2}{\longrightarrow} \Lambda ^{\ot 3} 
   \stackrel{\delta_1}{\longrightarrow} \Lambda ^{\ot 2}
   \stackrel{m}{\longrightarrow} \Lambda \rightarrow 0,
\end{equation}
where $m$ denotes multiplication, and for each $i$, $\delta_i$ is the $\Lambda^e$-module
map determined by its values on monomials, 
$$
   \delta_i ( \lambda_0\ot \cdots \ot \lambda_{i+1}) = \sum_{j=0}^i (-1)^j
       \lambda_0\ot \cdots \ot \lambda_j\lambda_{j+1}\ot \cdots \ot \lambda_{i+1},
$$
for $\lambda_0,\ldots,\lambda_{i+1}\in \Lambda$. 
We will also use the {\em normalized bar resolution} 
$\overline{\B} = \overline{\B}(\Lambda)$, whose $i$th component  is $\Lambda\ot \overline{\Lambda}
^{\ot i}\ot \Lambda$, where $\overline{\Lambda}= \Lambda / (k\cdot 1)$ as a $k$-vector space.
One checks that each differential $\delta_i$ defined above factors through 
$\Lambda\ot \overline{\Lambda}^{\ot i}\ot \Lambda$ by employing a choice of
section of the quotient map $\Lambda \rightarrow \overline{\Lambda}$.
Abusing notation, we will not always distinguish between elements of $\Lambda$ and those of
$\overline{\Lambda}$, making use of our choice of section as needed. 

There is a chain map $\Delta_{\B}: \B \rightarrow \B\ot _{\Lambda}\B$, called a
{\em diagonal map},  given on monomials by 
\begin{equation}\label{bar-diagonal} 
  \Delta_{\B}(\lambda_0\ot \cdots \ot\lambda_{i+1}) = \sum _{j=0}^i
    (\lambda_0\ot\cdots \ot\lambda_j\ot 1) \ot_{\Lambda} (1\ot \lambda_{j+1}\ot \cdots\ot\lambda_{i+1}) 
\end{equation}
for all $\lambda_0,\ldots, \lambda_{i+1}\in \Lambda$. 

The {\em cup product} on Hochschild cohomology may be defined at the chain level as
follows. Let $f\in \Hom_{\Lambda^e}(\Lambda^{\ot (i+2)},\Lambda)$,
$g\in \Hom_{\Lambda^e}(\Lambda^{\ot (j+2)}, \Lambda)$.
Then $f\smile g \in\Hom_{\Lambda^e} (\Lambda^{\ot (i+j+2)},\Lambda)$ is defined on 
monomials by 
$$
   (f\smile g) ( \lambda_0\ot\cdots\ot\lambda_{i+j+1}) =
     f(\lambda_0\ot\cdots\ot \lambda_i\ot 1) g(1\ot \lambda_{i+1}\ot\cdots\ot \lambda_{i+j+1}),
$$
for all $\lambda_0,\ldots,\lambda_{i+j+1}\in \Lambda$. 
This can be viewed as a composition of maps
\begin{equation}\label{cup-prod}
   \B \stackrel{\Delta_{\B}}{\relbar\joinrel\longrightarrow}
   \B\ot_{\Lambda}\B \stackrel{f\ot g}{\relbar\joinrel\relbar\joinrel\longrightarrow}
   \Lambda\ot_{\Lambda}\Lambda \stackrel{\sim}{\longrightarrow} \Lambda.
\end{equation} 
The cup product may be defined similarly on the normalized bar resolution.

Let $\K\rightarrow \Lambda$ be any resolution of $\Lambda$ by  free $\Lambda^e$-modules.
For each $i$, since $\K_i$ is free, we may identify it with 
$\Lambda\ot W_i\ot\Lambda$ for a vector space $W_i$.
We define chain maps $F^{\scl}_{\K}, F^{\scr}_{\K}: \K \ot _{\Lambda}\K \rightarrow \K$ as follows.
Identify $\K_i\ot_{\Lambda}\K_j$ with the tensor product $(\Lambda\ot W_i\ot \Lambda)\ot_{\Lambda}
(\Lambda\ot W_j\ot \Lambda) \cong \Lambda \ot W_i\ot \Lambda\ot W_j\ot \Lambda$.  
If $\lambda, \lambda ' , \lambda ''\in \Lambda$, $x\in W_i$, $x'\in W_j$ and $i,j >0$, 
define 
\begin{eqnarray*}
F^{\scl}_{\K}(\lambda\ox x\ox \lambda'\ox x'\ox \lambda'') & = & 0 , \\ 
F^{\scl}_{\K}(\lambda\ox \lambda'\ox x'\ox \lambda'') & = & \lambda\lambda'\ox 
x'\ox \lambda'' , \\
F^{\scl}_{\K}(\lambda\ox x\ox \lambda'\ox \lambda'') &=& 0 . 
\end{eqnarray*}
In degree $0$, define
$$
F^{\scl}_{\K}(\lambda\ox \lambda'\ox \lambda'') = \lambda\lambda'\ox \lambda''.
$$
Again, if the homological degrees $i,j$  of $x,x'$ are both positive, then define 
\begin{eqnarray*}
F^{\scr}_{\K}(\lambda\ox x\ox \lambda'\ox x'\ox \lambda'') & = & 0 , \\ 
F^{\scr}_{\K}(\lambda\ox \lambda'\ox x'\ox \lambda'') & = & 0, \\
F^{\scr}_{\K}(\lambda\ox x\ox \lambda'\ox \lambda'') &=& 
   \lambda\ox x\ox \lambda'\lambda'' . 
\end{eqnarray*}
In degree $0$, define 
$$
F_{\K}^{\scr}(\lambda\ox \lambda'\ox \lambda'') = \lambda\ox \lambda'\lambda''.
$$
We may check, similarly to an argument in \cite{NW}, 
that $F^{\scl}_{\K}$ and $F^{\scr}_{\K}$ are chain maps. 
We include details for completeness. 

\begin{lemma}\label{lem:FK} 
The maps $F^{\scl}_{\K}, F^{\scr}_{\K}: \K\ot_{\Lambda} \K \rightarrow \K$,
defined above,   are chain maps. 
\end{lemma}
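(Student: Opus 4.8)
The plan is to verify directly that each of $F^{\scl}_{\K}$ and $F^{\scr}_{\K}$ commutes with differentials, organizing the computation by the homological bidegree of the two tensor factors. Write $d$ for the differential of $\K$ and let $\varepsilon\colon \K_0 = \Lambda^e \to \Lambda$ denote the augmentation, which is multiplication; beyond $\K$ being a complex of free $\Lambda^e$-modules, the only property I will use is the defining relation $\varepsilon\, d = 0$ of a resolution. On $\K\ot_\Lambda\K$ let $D$ denote the total differential, given by $D(u\ot_\Lambda v) = d(u)\ot_\Lambda v + (-1)^{|u|}\, u\ot_\Lambda d(v)$, so that I must show $d\, F^{\scl}_{\K} = F^{\scl}_{\K}\, D$ and likewise for $F^{\scr}_{\K}$.

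First I would reduce to generators. Since $d$, $D$, $F^{\scl}_{\K}$, and $F^{\scr}_{\K}$ are all maps of $\Lambda^e$-modules, acting by multiplication on the outermost tensor factors, it suffices to check the identities on elements of the form $1\ot x\ot\lambda'\ot x'\ot 1$ with $x\in W_i$, $x'\in W_j$, together with the degenerate forms in which one or both of the $W$-factors is replaced by a degree-zero factor. I would then split into the four cases determined by whether $i$ and $j$ are positive or zero, using the identification $\K_i\ot_\Lambda\K_j\cong \Lambda\ot W_i\ot\Lambda\ot W_j\ot\Lambda$ and absorbing any $W_0 = k$ factor into an adjacent copy of $\Lambda$.

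The substance of the argument is the bookkeeping in each case. When $i,j>0$, the map $F^{\scl}_{\K}$ sends $1\ot x\ot\lambda'\ot x'\ot 1$ to $0$, so the left-hand side vanishes; applying $D$ first produces one term in $\K_{i-1}\ot_\Lambda\K_j$ and one in $\K_i\ot_\Lambda\K_{j-1}$, and I would check that $F^{\scl}_{\K}$ kills both—the second in all subcases, and the first in all subcases except $i=1$. The case $i=1$ is the crux: there the surviving term lies in $\K_0\ot_\Lambda\K_j$, where $F^{\scl}_{\K}$ multiplies the two leftmost factors, and that multiplication is exactly $\varepsilon$ applied to $d(1\ot x\ot 1)\in\K_0$; the relation $\varepsilon\, d = 0$ forces it to vanish, matching the left-hand side. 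The genuinely nonzero case $i=0$, $j>0$ is where $F^{\scl}_{\K}$ behaves like a partial multiplication, and here I expect $d\, F^{\scl}_{\K}$ and $F^{\scl}_{\K}\, D$ to agree term by term after applying $d$ to $x'$, with the subcase $j=1$ again reducing to $\varepsilon\, d=0$. The remaining cases $i>0,\ j=0$ and $i=j=0$ are routine. The computation for $F^{\scr}_{\K}$ is the mirror image, interchanging the roles of the left and right tensor factors, and again invokes $\varepsilon\, d = 0$ in the boundary subcases.

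I expect the main obstacle to be purely organizational: correctly tracking the internal copies of $\Lambda$ that $d$ inserts between the two tensor factors, since $F^{\scl}_{\K}$ and $F^{\scr}_{\K}$ act by multiplying across precisely that boundary, so one cannot reduce all the way to $1\ot x\ot 1\ot x'\ot 1$ and must retain the middle factor $\lambda'$. Once the cases are laid out, no step is difficult; the single conceptual input is that the resolution condition $\varepsilon\,d=0$ is what makes the boundary contributions cancel, which is also why the statement holds for an arbitrary free resolution $\K$ and not merely for the bar resolution.
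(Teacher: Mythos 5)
Your proposal is correct and follows essentially the same route as the paper: a direct case-by-case verification over the bidegrees $(i,j)$, with the boundary case $i=1$ (resp.\ $j=1$ for $F^{\scr}_{\K}$) handled by the observation that the differential followed by the augmentation/multiplication map is zero. One tiny inaccuracy: in the subcase $i=0$, $j=1$ the two sides of $d\,F^{\scl}_{\K}=F^{\scl}_{\K}\,D$ already agree termwise without invoking $\varepsilon\, d=0$; that relation is only genuinely needed in the cases where the positive-degree factor adjacent to the multiplication sits in degree $1$ and gets differentiated down to $\K_0$.
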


\begin{proof}
We check $F^{\scl}_{\K}$; the map $F^{\scr}_{\K}$ is similar.
In degree 1, 
let $\lambda\ot \lambda'\ot x\ot \lambda''\in \K_0\ot _{\Lambda} \K_1\cong
\Lambda\ot \Lambda\ot W_1\ot \Lambda$.
Then
\begin{eqnarray*}
  d_1 F^{\scl}_{\K} (\lambda\ot \lambda'\ot x \ot \lambda'') & = 
    & d_1(\lambda\lambda'\ot x\ot \lambda'') \\
      &=& \lambda\lambda' d_1(1\ot x\ot 1)\lambda'',\\
  F^{\scl}_{\K} d_1 (\lambda\ot \lambda'\ot x\ot \lambda'') &=
   & F^{\scl}_{\K}((\lambda\ot \lambda')\ot_{\Lambda} 
        d_1(1\ot x\ot 1)\lambda''  )\\
    &=& F^{\scl}_{\K}(\lambda\ot \lambda' d_1(1\ot x\ot 1)\lambda'') . 
\end{eqnarray*}
Now
$F^{\scl}_{\K}(\lambda\ot \lambda' d_1(1\ot x \ot 1)\lambda'')
= \lambda\lambda' d_1(1\ot x\ot 1)\lambda''$.
On $\K_1\ot _{\Lambda} \K_0$, it may be checked that 
$d_1F^{\scl}_{\K} = F^{\scl}_{\K} d_1 = 0$ due to the definition of $F^{\scl}_{\K}$,
since $d_1$ followed by the multiplication map is 0. 

Next we check $d_n F^{\scl}_{\K} = F^{\scl}_{\K}d_n$ for $n>1$.
Let $\lambda\ot \lambda'\ot x\ot \lambda''\in \K_0\ot_{\Lambda} 
\K_n \cong \Lambda\ot \Lambda\ot W_n\ot \Lambda$. Then 
\begin{eqnarray*}
   d_n F^{\scl}_{\K} (\lambda\ot \lambda'\ot x\ot \lambda'') &= 
   & d_n(\lambda\lambda'\ot x\ot \lambda''),\\
     F^{\scl}_{\K} d_n (\lambda\ot \lambda'\ot x\ot \lambda'') & = 
    & F^{\scl}_{\K}((\lambda\ot \lambda')\ot_{\Lambda}
     d_n(1\ot x\ot \lambda''))\\
   &= & \lambda\lambda'd_n(1\ot x\ot \lambda'')\\
   &=& d_n (\lambda\lambda'\ot x\ot \lambda''),
\end{eqnarray*}
so $d_nF^{\scl}_{\K}$ and $F^{\scl}_{\K}d_n$ restrict to the same map on $\K_0\ot _{\Lambda} \K_n$.
Similarly we may check these maps on $\K_n\ot _{\Lambda} \K_0$.
Now let $i,j>0$, $i+j=n$, $x\in W_i$, $x'\in W_j$.
Then
\begin{eqnarray*}
   d_nF^{\scl}_{\K}(\lambda\ot x\ot \lambda'\ot x'\ot \lambda'') & = & 0 , \\
    F^{\scl}_{\K} d_n (\lambda\ot x\ot \lambda'\ot x'\ot \lambda'') & = & F^{\scl}_{\K} (
    d_i(\lambda\ot x\ot \lambda')\ot_{\Lambda} (1\ot x'\ot \lambda'') + \\
   && \hspace{.5cm} 
    (-1)^i (\lambda\ot x\ot \lambda')\ot_{\Lambda} d_j(1\ot x'\ot \lambda''))\\
  &=& 0 .
\end{eqnarray*}
To see this, note that 
in the above, if $i>1$, by the definition
of $F^{\scl}_{\K}$, the equality holds. 
If $i=1$, it also works since for example
$d_1(\lambda\ot x\ot \lambda') $ is in the kernel of
the multiplication map.
\end{proof}

For our definition of Gerstenhaber bracket, we will assume 
that  $\K$ satisfies the following conditions from \cite[3.1]{NW}. 

\begin{cond}\label{conditions} 
{\em We assume:
\begin{itemize}
\item[(a)] There is an embedding $\iota: \K\rightarrow \B$ 
lifting  the identity map on $\Lambda$. 
\item[(b)] There is a chain map $\pi : \B\rightarrow \K$ for which $\pi\iota = \mathbf{1}_{\K}$.
\item[(c)] There is a chain map $\Delta_{\K}: \K\rightarrow \K\ot _{\Lambda}\K$ for which
$\Delta_{\B} \iota = (\iota\ot_{\Lambda}\iota) \Delta_{\K}$. 
\end{itemize}}
\end{cond} 

\noindent
Clearly if we set $\K=\B$, it will satisfy these conditions. 
It is explained in \cite{NW} that if $\Lambda$ is a Koszul algebra and $\K$ is 
its Koszul resolution, then  $\K$ satisfies these conditions; 
in particular, the needed diagonal maps $\Delta_{\K}$ are given in \cite{BGSS,N}. 
We will use this fact to compute brackets for some  quantum complete intersections 
in Section~\ref{sec:qci} below.
An advantage of this method over traditional methods is that we do not need to use
or even know the often cumbersome map $\pi$ explicitly. 
For our theorem in Section~\ref{Galgiso}, giving an isomorphism of Gerstenhaber algebras 
in the context of a twisted tensor product, 
we will take $\K$ to be the total complex of the twisted tensor product of two 
normalized bar resolutions. 

Now let 
\begin{equation}\label{F-defn}
 F_{\K}= F_{\K}^{\scl} - F_{\K}^{\scr},
\end{equation}
where $F_{\K}^{\scl}$, $F_{\K}^{\scr}$ are defined just before Lemma~\ref{lem:FK}.
Then $F_{\K}$ is a chain map by Lemma~\ref{lem:FK}. 
This is the map $F_{\K}$ as defined in \cite{NW}.
It is shown there that $F_{\K}$ is a boundary in $\Hom_{\Lambda^e}(\K\ot_{\Lambda}\K, \K)$,
and so there is a map $\phi: \K\ot _{\Lambda}\K \rightarrow \K$ for which 
\begin{equation}\label{eqn:ch}
d(\phi):= d_{\K}\phi
+ \phi d_{\K\ot_{\Lambda}\K} = F_{\K},
\end{equation} 
that is $\phi$ is a {\em contracting homotopy} for $F_{\K}$.
Let $f\in\Hom_{\Lambda^e}(\K_i, \Lambda)$ and $g\in\Hom_{\Lambda^e}(\K_j,\Lambda)$
represent elements of Hochschild cohomology in degrees $i$ and $j$, respectively.  
By \cite[Theorem 3.2.5]{NW}, 
their {\em Gerstenhaber bracket} on Hochschild cohomology is given 
at the chain level by 
\begin{equation}\label{bracket-formula} 
  [f,g]= f\circ g - (-1)^{(i-1)(j-1)} g\circ f
\end{equation} 
where  the circle product $f\circ g$ is the composition
\begin{equation}\label{circ-formula} 
   \K \stackrel{\Delta^{(2)}_{\K}}{\relbar\joinrel\longrightarrow} \K\ot_{\Lambda}\K \ot_{\Lambda}\K
   \stackrel{\mathbf{1}_{\K}\ot g \ot \mathbf{1}_{\K}}
   {\relbar\joinrel\relbar\joinrel\relbar\joinrel\relbar\joinrel\relbar\joinrel\relbar\joinrel\longrightarrow}
    \K\ot_{\Lambda}\K \stackrel{\phi}{\longrightarrow} \K \stackrel{f}{\longrightarrow} \Lambda .
\end{equation} 
The definition of the map $\mathbf{1}_{\K}\ot g\ot \mathbf{1}_{\K}$ above includes ``Koszul signs,''
that is, on elements the map is given by 
\begin{equation}\label{Koszul-signs}
  \lambda\ot x\ot \lambda '\ot y\ot \lambda''\ot z \ot \lambda ''' \mapsto
   (-1)^{lj} (\lambda\ot x)\ot g(\lambda'\ot y\ot\lambda'')\ot
   (z\ot \lambda ''')
\end{equation} 
for all $x\in\K_l$, $y\in \K_m$, $z\in\K_n$, and $\lambda,\lambda',\lambda'',\lambda'''\in \Lambda$.
The map $\Delta^{(2)}_{\K}$ is given by $(\Delta_{\K}\ot \mathbf{1}_{\K})\Delta_{\K}$
(which is equal to $(\mathbf{1}_{\K}\ot \Delta_{\K})\Delta_{\K}$ by a
calculation using Condition~\ref{conditions}(c)). 
Similarly we define $g\circ f$.
In \cite{NW}, this circle product is denoted by $\circ_{\phi}$ and bracket
by $[\cdot ,\cdot ] _{\phi}$ in order to distinguish these maps at the chain level.
By \cite[Theorem~3.2.5]{NW}, the operations induced by $\circ_{\phi}$ and $[\cdot,\cdot]_{\phi}$ 
on cohomology do not depend on $\phi$, and so we choose not to make such a  distinction here.

In case $\K=\B$, as in \cite{NW}, we may set $\phi= G_{\B}$ where
\begin{equation}\label{equation-G}
   G_{\B} ((\lambda_0\ot \cdots \ot \lambda_{p-1}) \ot (\lambda_p)\ot (\lambda_{p+1}\ot\cdots\ot
     \lambda_{n+1}))  \hspace{4cm}
\end{equation}
$$
  \hspace{3cm}   = (-1)^{p-1} \lambda_0\ot \cdots\lambda_{p-1}\ot \lambda_p\ot \lambda_{p+1}\ot\cdots
    \ot \lambda_{n+1} $$
for all $\lambda_0,\ldots, \lambda_{n+1}\in \Lambda$. 
If $\K = \overline{\B}$, the normalized bar resolution, we may set
$\phi = G_{\overline{\B}}$ where $G_{\overline{\B}}$ is defined similarly,
by replacing $\lambda_j$ by its image in $\overline{\Lambda}$ in the formula;
the proof of \cite[Proposition~2.0.8]{NW} may be adapted to show that 
$G_{\overline{\B}}$ is indeed a contracting homotopy for $F_{\overline{\B}}$. 

One of the properties of the Gerstenhaber bracket is a compatibility relation
with the cup product: On Hochschild cohomology, 
\begin{equation}\label{graded-derivation} 
   [f\smile g,h] = [f,h]\smile g + (-1)^{i(l-1)} f\smile [g,h],
\end{equation} 
where $l$ is the homological degree of $h$. 


\section{Gerstenhaber brackets for twisted tensor products} \label{Gbracket twisted}

Let  $R$ and $S$ be $k$-algebras, 
graded by abelian groups $A$ and $B$, respectively. 
Let 
$$t:A\ot_{\Z}B \rightarrow k^{\times}$$ 
be a {\em twisting}, that is a homomorphism of abelian groups, 
denoted $t(a\ot_{\Z} b) = t^{\langle a\mid b\rangle}$ for all $a\in A$, $b\in B$. 
Let $R\ot^t S$ denote the twisted tensor product of algebras
as in Bergh and Oppermann \cite{BO}. 
That is, $R\ot^t S = R\ot S$ as a vector space, and
$$
   (r\ot s) \cdot ^t (r'\ot s') = t^{\langle  |r'| \mid |s|\rangle}
     rr'\ot ss'
$$
for all homogeneous $r,r'\in R$ and $s,s'\in S$,
where $|r'|$, $|s|$ are the degrees of $r'$, $s$ in $A,B$, respectively. 
We will often write $t^{\langle r' | s \rangle}$ in place of
$t^{\langle |r'|||s|\rangle}$. 
Note that $R\ot ^t S$ is $(A\oplus B)$-graded. 

If $X$ is an $A$-graded $R^e$-module and $Y$ a $B$-graded $S^e$-module, 
denote by $X\ot ^t Y$ the tensor product $X\ot Y$ as a vector space,
with $(R\ot^t S)^e$-module structure given by
\begin{equation}\label{module-action} 
   (r\ot s) (x\ot y) (r'\ot s') = t^{\langle x  | s \rangle} t^{\langle r' | y\rangle}
    t^{\langle r' | s \rangle} rxr'\ot sys' 
\end{equation} 
for all homogeneous $r,r'\in R$, $s,s'\in S$,  $x\in X$, and $y\in Y$
(see  \cite[Definition/Construction~4.1]{BO}).
By \cite[Lemma~4.3]{BO}, if $X$ and $Y$ are projective modules,
then $X\ot ^t Y$ is an $(A\oplus B)$-graded projective $(R\ot ^t S)^e$-module. 

Let 
$$
  P: \quad \cdots \stackrel{d_2^P}{\relbar\joinrel\longrightarrow} P_1
  \stackrel{d_1^P}{\relbar\joinrel\longrightarrow} P_0
   \stackrel{d_0^P}{\relbar\joinrel \longrightarrow} R \rightarrow 0
$$
be an $A$-graded $R^e$-projective resolution of $R$ and let 
 $$
  Q: \quad \cdots \stackrel{d_2^Q}{\relbar\joinrel\longrightarrow} Q_1
  \stackrel{d_1^Q}{\relbar\joinrel\longrightarrow} Q_0
   \stackrel{d_0^Q}{\relbar\joinrel \longrightarrow} S \rightarrow 0
$$
be a $B$-graded $S^e$-projective 
resolution of $S$.  
In particular, the differentials are graded maps (i.e.\ preserve degree). 
By  \cite[Lemmas~4.3, 4.4, 4.5]{BO},
 the total complex of  $P\ot^t Q$ 
is an $(A\oplus B)$-graded $(R\ot^t S)^e$-projective resolution of $R\ot^t S$.
The differentials are given as usual by 
$d_{i,j}^{P\ot ^t Q} := d_i^P\ot \mathbf{1} + (-1)^i\mathbf{1}\ot d_j^Q$. 

Now assume that $P$ is a free resolution of $R$ as an $R^e$-module,
and that $Q$ is a free resolution of $S$ as an $S^e$-module. 
Assume $P_0=R\ot R$ and $Q_0=S\ot S$, and $d_0^P$ and $d_0^Q$ are 
multiplication maps.
Then $P_0\ot ^t Q_0 = (R\ot R)\ot ^t (S\ot S)$, which is isomorphic to 
$(R\ot ^t S)^e$ by the proof of \cite[Lemma~4.3]{BO} (see also Lemma~\ref{isomorphism} 
below). 
We will identify $P_n$ with $R\ot W_n\ot R$ for a vector space $W_n$,
for each $n$, and similarly $Q_n$. 

Assume that $\phi_P$ and $\phi_Q$ are contracting homotopies for $F_P$ and $F_Q$
(see (\ref{F-defn}) and (\ref{eqn:ch})),
respectively, that is, $d(\phi_P) = F_P$ and $d(\phi_Q) = F_Q$.
We will construct from these a contracting homotopy $\phi = \phi_{P\ot^t Q}$
for $F_{P\ot ^ t Q}$. 

By its definition in (\ref{F-defn}), $F_{P\ot ^t Q}$  
is a map from $(P\ot^t Q)\ot _{R\ot^t S} (P\ot^t Q)$ to $P\ot^t Q$. 
We will want to compare it with maps from  
$(P\ot _R P)\ot ^t (Q\ot_S Q)$ to $P\ot ^t Q$. 
We will need the following isomorphism of $(R\ot^t S)^e$-modules, similar
to that found in the proof of \cite[Lemma~4.3]{BO}.

\begin{lemma}\label{isomorphism}
Let $X,X'$ be $A$-graded $R^e$-modules and $Y,Y'$ be $B$-graded $S^e$-modules. 
There is an isomorphism of $(R\ot^t S)^e$-modules,
$$
  \sigma:   (X\ot^t Y)\ot _{R\ot^t S} (X'\ot^t Y') 
       \stackrel{\sim}{\longrightarrow} 
       (X\ot _{R} X') \ot^t (Y\ot _{S} Y'),
$$
given by 
    $\sigma ( (x\ot y) \ot (x'\ot y'))  = t^{\langle x' \mid y \rangle}
             (x\ot x') \ot (y\ot y')$ 
for all homogeneous $x\in X$, $x'\in X'$, $y\in Y$, and $y'\in Y'$. 
\end{lemma}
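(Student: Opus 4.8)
The plan is to define $\sigma$ first on the unbalanced tensor product $(X\ot^t Y)\ot_k (X'\ot^t Y')$ by the stated formula, where on underlying vector spaces it is the standard permutation $x\ot y\ot x'\ot y'\mapsto (x\ot x')\ot(y\ot y')$ rescaled by the scalar $t^{\langle x'\mid y\rangle}\in k^{\times}$, and then to verify in turn that (i) it descends to the balanced tensor product $\ot_{R\ot^t S}$, (ii) it is $(R\ot^t S)^e$-linear, and (iii) it is bijective. Since the underlying map is a rescaling of a permutation of tensor factors, bijectivity will follow formally once well-definedness of both $\sigma$ and a candidate inverse is checked, so the real content is in (i) and (ii), both of which reduce to bookkeeping with the twisting scalars.

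For (i) I would first record the one-sided actions extracted from (\ref{module-action}): the right action of a homogeneous $r'\ot s'$ on the first tensorand is $(x\ot y)(r'\ot s') = t^{\langle r'\mid y\rangle}\, xr'\ot ys'$, and the left action on the second tensorand is $(r'\ot s')(x'\ot y') = t^{\langle x'\mid s'\rangle}\, r'x'\ot s'y'$. Applying $\sigma$ to $((x\ot y)(r'\ot s'))\ot(x'\ot y')$ and to $(x\ot y)\ot((r'\ot s')(x'\ot y'))$, and using that $t$ is a homomorphism in each variable (so that, for example, $t^{\langle x'\mid ys'\rangle} = t^{\langle x'\mid y\rangle}t^{\langle x'\mid s'\rangle}$ and $t^{\langle r'x'\mid y\rangle}=t^{\langle r'\mid y\rangle}t^{\langle x'\mid y\rangle}$), both expressions collapse to the same scalar times $(xr'\ot x')\ot(ys'\ot y')$, which equals $(x\ot r'x')\ot(y\ot s'y')$ in the target because of the identifications over $R$ and over $S$. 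This shows that the defining relation of $\ot_{R\ot^t S}$ is respected.

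For (ii) I would compute the left action of $r\ot s$ and the right action of $r'\ot s'$ on source and target, using $(r\ot s)(x\ot y)=t^{\langle x\mid s\rangle}rx\ot sy$ on the source together with the action (\ref{module-action}) on $(X\ot_R X')\ot^t(Y\ot_S Y')$, and recalling that the degree of $x\ot x'$ in $X\ot_R X'$ is $|x|+|x'|$; one then checks that the $t$-exponents on the two sides of $\sigma$ match, again by multiplicativity of $t$. For (iii) I would take $\sigma^{-1}((x\ot x')\ot(y\ot y')) = t^{-\langle x'\mid y\rangle}(x\ot y)\ot(x'\ot y')$, verify it is well defined over $\ot_R$ and $\ot_S$ by the analogous computation, and observe that $\sigma^{-1}\sigma$ and $\sigma\sigma^{-1}$ are the identity since the rescaling scalars are mutually inverse. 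The main obstacle is purely the scalar bookkeeping in (i): one must track that the twist $t^{\langle x'\mid y\rangle}$ built into $\sigma$ interacts correctly with the three twisting factors already present in the module action (\ref{module-action}), and it is exactly the homomorphism property of $t$ that makes every stray factor cancel.
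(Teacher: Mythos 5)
Your proposal is correct and follows essentially the same route as the paper: direct verification that the stated formula respects the balanced tensor product and the $(R\ot^t S)^e$-action via the multiplicativity of $t$ in each variable, with the same explicit inverse $(x\ot x')\ot(y\ot y')\mapsto t^{-\langle x'\mid y\rangle}(x\ot y)\ot(x'\ot y')$. The only difference is that you spell out the descent to $\ot_{R\ot^t S}$ (and correctly identify the one-sided twists extracted from (\ref{module-action})), a step the paper dispatches with ``it may be checked.''
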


\begin{proof}
It may be checked that this yields a well-defined map on
the tensor product in each degree. 
We check that this is an $(R\ot^t S)^e$-module homomorphism.
Choose homogeneous elements $r\in R$ and $s\in S$.
We check the left  action:
\begin{eqnarray*}
  \sigma \big((r\ot s)\cdot ((x\ot y)\ot (x'\ot y'))\big) & = &
  \sigma \big(  t^{\langle x\mid s\rangle} (r x \ot s y)\ot (x'\ot y')\big)\\
    & = & t^{\langle x\mid s\rangle} t^{\langle x'\mid s y\rangle}
   (r x \ot x')\ot (s y\ot y'),\\
 (r\ot s)\cdot \sigma \big((x\ot y)\ot (x'\ot y') \big) 
      & = & (r\ot s)\cdot (t^{\langle x'\mid y \rangle} (x\ot x')\ot (y\ot y'))\\
   & = & 
    t^{\langle x'\mid y\rangle} t^{\langle  x\ot x'\mid s\rangle}
     (r x\ot x')\ot (s y\ot y') . 
\end{eqnarray*}
Now $t^{\langle x'\mid s y\rangle} = t^{\langle x'\mid s\rangle}
t^{\langle x'\mid y\rangle}$ and $t^{\langle x\ot x'\mid s\rangle}
= t^{\langle x\mid s\rangle} t^{\langle x'\mid s\rangle}$ so the
above expressions are the same. Similarly, the right action commutes
with $\sigma$. 
Clearly this $(R\ot ^t S)^e$-module map has an inverse given by
$(x\ot x')\ot (y\ot y') \mapsto  t^{- \langle x'| y\rangle} (x\ot y)
  \ot (x'\ot y')$. 
\end{proof}

We  next modify $\sigma$ by a sign to define a chain map
from $(P\ot ^t Q)\ot_{R\ot ^t S} (P\ot ^t Q)$ to 
$(P\ot _R P)\ot ^t (Q\ot _S Q)$
(cf.\ the map $\tau$ of \cite[p.~1471]{LZ}). 

\begin{lemma}\label{sigma-signed}
There is a chain map 
$$
  \sigma: (P\ot ^t Q)\ot _{R\ot ^t S}(P\ot ^t Q) \rightarrow
    (P\ot _R P) \ot ^t (Q\ot _S Q)
$$
that is an isomorphism of $(R\ot ^t S)^e$-modules in each degree, given by
$$
  \sigma ((x\ot y)\ot (x'\ot y')) = (-1)^{jp} t^{\langle x' | y\rangle}
   (x\ot x') \ot (y\ot y')
$$
on $(P_i\ot ^t Q_j) \ot _{R\ot ^t S} (P_p\ot ^t Q_q)$. 
\end{lemma}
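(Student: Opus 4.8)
The plan is to reduce the lemma to a single sign-bookkeeping computation, the module-isomorphism part being essentially free. On the homogeneous component $(P_i\ot^t Q_j)\ot_{R\ot^t S}(P_p\ot^t Q_q)$, the map $\sigma$ defined here is exactly the isomorphism of Lemma~\ref{isomorphism} multiplied by the fixed unit $(-1)^{jp}$. Since $(-1)^{jp}$ depends only on the homological bidegrees, which the $(R\ot^t S)^e$-action preserves, this rescaling leaves both the well-definedness on the tensor product over $R\ot^t S$ and the property of being an $(R\ot^t S)^e$-module isomorphism untouched. Thus by Lemma~\ref{isomorphism} the signed $\sigma$ is already an isomorphism of modules in each degree, and the only remaining point is to verify that $\sigma$ is a chain map, i.e.\ $\sigma\, d = d\,\sigma$.

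To set this up I would write both differentials explicitly on a general element $(x\ot y)\ot(x'\ot y')$ with $x\in P_i$, $y\in Q_j$, $x'\in P_p$, $y'\in Q_q$. On the source, the tensor product over $R\ot^t S$ of the total complex $P\ot^t Q$ with itself carries the Koszul sign governed by the total degree $i+j$ of the left factor, and each copy of $d^{P\ot^t Q}$ expands via $d^{P\ot^t Q}=d^P\ot\mathbf{1}+(-1)^i\mathbf{1}\ot d^Q$. On the target, the twisted tensor product $(P\ot_R P)\ot^t(Q\ot_S Q)$ of total complexes carries the twisted-tensor sign $(-1)^{i+p}$ governed by the total degree of $P_i\ot_R P_p$, while $d^{P\ot_R P}(x\ot x')=d^P(x)\ot x'+(-1)^i x\ot d^P(x')$ and $d^{Q\ot_S Q}(y\ot y')=d^Q(y)\ot y'+(-1)^j y\ot d^Q(y')$ carry their own internal signs. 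Both sides then expand into the same four terms, indexed by which of $x,y,x',y'$ is hit by a differential, and the task is to match them one at a time.

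The verification rests on two observations, and I expect the second to be the main obstacle. First, because the differentials of $P$ and $Q$ are graded maps, applying $d^P$ to $x'$ or $d^Q$ to $y$ does not change their internal degrees, so the twisting scalar $t^{\langle x'\mid y\rangle}$ produced by $\sigma$ is identical in the source and target expansion of every term; the twist is merely carried along. Second, the reordering sign $(-1)^{jp}$ built into $\sigma$ is exactly what reconciles the two sign conventions. This is transparent for the two ``outer'' terms (a differential on $x$ or on $y'$), but delicate for the two ``middle'' terms, where a differential lands on $y$ or on $x'$: there the outer Koszul sign $(-1)^{i+j}$ of the source must be reconciled simultaneously with the internal sign of $d^{P\ot_R P}$ or $d^{Q\ot_S Q}$ and with the twisted-tensor sign $(-1)^{i+p}$ on the target, and the bidegree feeding $\sigma$ shifts by one in $j$ or in $p$. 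The identity that makes it work is that such a shift changes $jp$ by $\pm p$ or $\pm j$, so $(-1)^{(j-1)p}=(-1)^{(j+1)p}$ (and analogously in $p$); this congruence absorbs the discrepancy and shows that $(-1)^{jp}$, and no other exponent, yields $\sigma\, d=d\,\sigma$.
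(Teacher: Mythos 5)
Your proposal is correct and follows essentially the same route as the paper: the module-isomorphism part is deduced from Lemma~\ref{isomorphism} exactly as in the text (the sign $(-1)^{jp}$ depends only on homological bidegree, which the module action preserves), and the chain-map property is the ``calculation'' the paper leaves implicit, which you outline correctly — in particular the congruence $(-1)^{(j\pm1)p}=(-1)^{jp}(-1)^{p}$ (and its analogue in $p$) is precisely what reconciles the Koszul sign $(-1)^{i+j}$ on the source with the internal signs of $d^{P\ot_R P}$, $d^{Q\ot_S Q}$ and the sign $(-1)^{i+p}$ on the target, while the twist $t^{\langle x'\mid y\rangle}$ is untouched because the differentials preserve the $A$- and $B$-gradings.
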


\begin{proof}
That $\sigma$ is an isomorphism of $(R\ot ^t S)^e$-modules follows from 
Lemma~\ref{isomorphism}: The extra sign in the definition still yields an
$(R\ot ^t S)^e$-module map, since action by elements of $R\ot ^t S$ does not change the homological
degree.
A calculation shows that this map $\sigma$ commutes with the differentials.
\end{proof}

We will need to switch notation back and forth, using the
isomorphism of Lemma~\ref{sigma-signed}, in the following. We will also  use the 
identification of  $P_m\ot _{R} P_n$ with $R\ot W_m \ot R \ot W_n\ot R$. 

\begin{lemma}\label{F}
The  map $F=F_{P\ot ^t Q}$ on $(P\ot ^t Q)\ot _{R\ot ^t S} (P\ot ^t Q)$,
as defined in (\ref{F-defn}), is precisely 
$$ F = ( F_P^{\scl}\ot F_Q^{\scl} - F_P^{\scr}\ot F_Q^{\scr} )\sigma , $$ 
where $\sigma$ is defined in Lemma~\ref{sigma-signed}. 
\end{lemma}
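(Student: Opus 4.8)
The plan is to verify the identity componentwise, exploiting that every map in sight is $(R\ot^t S)^e$-linear and is supported on a single family of graded pieces. Since $F_{P\ot^t Q} = F^{\scl}_{P\ot^t Q} - F^{\scr}_{P\ot^t Q}$ by (\ref{F-defn}), and the claimed right-hand side splits as $(F^{\scl}_P\ot F^{\scl}_Q)\sigma - (F^{\scr}_P\ot F^{\scr}_Q)\sigma$, it suffices to prove the two separate identities
\[
F^{\scl}_{P\ot^t Q} = (F^{\scl}_P\ot F^{\scl}_Q)\,\sigma, \qquad F^{\scr}_{P\ot^t Q} = (F^{\scr}_P\ot F^{\scr}_Q)\,\sigma ,
\]
and subtract. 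I would treat the left one in detail, the right being entirely symmetric.

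First I would match supports. By the definition of $F^{\scl}_{\K}$ preceding Lemma~\ref{lem:FK}, applied to $\K = P\ot^t Q$, the map $F^{\scl}_{P\ot^t Q}$ vanishes unless the first tensor factor lies in homological degree $0$, that is in $P_0\ot^t Q_0$; equivalently $i=j=0$ for a first factor $P_i\ot^t Q_j$. On the other side, $\sigma$ preserves the bidegrees (it only permutes and rescales tensor factors), $F^{\scl}_P$ is nonzero only when the first $P$-factor has degree $0$, and $F^{\scl}_Q$ only when the first $Q$-factor has degree $0$; hence $(F^{\scl}_P\ot F^{\scl}_Q)\sigma$ is supported exactly where $i=0$ and $j=0$. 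The two supports coincide, so it remains to compare the maps on a first factor in $P_0\ot^t Q_0$.

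On this component the sign in Lemma~\ref{sigma-signed} is trivial: for a first factor $P_0\ot^t Q_0$ we have $j=0$, so $(-1)^{jp}=1$, and $\sigma$ reduces to the isomorphism of Lemma~\ref{isomorphism}, contributing only the scalar $t^{\langle x'|y\rangle}$. I would then evaluate both sides on a homogeneous element $(x\ot^t y)\ot_{R\ot^t S}(x'\ot^t y')$ with $x\in P_0$, $y\in Q_0$, $x'\in P_p$, $y'\in Q_q$. The left-hand side collapses the degree-$0$ factor $x\ot^t y$ into $x'\ot^t y'$ using multiplication in $R\ot^t S$; the right-hand side first applies $\sigma$, producing the scalar $t^{\langle x'|y\rangle}$ together with the separated element $(x\ot x')\ot^t(y\ot y')$, and then collapses $x$ into $x'$ via $F^{\scl}_P$ and $y$ into $y'$ via $F^{\scl}_Q$, i.e.\ via multiplication in $R$ and in $S$ separately. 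The two results agree precisely because multiplication in $R\ot^t S$ factors through the separate multiplications in $R$ and $S$ with exactly the cross-term twist recorded by $\sigma$; this is the algebra isomorphism $(R\ot^t S)^e\cong (R\ot R)\ot^t(S\ot S)$ from the proof of \cite[Lemma~4.3]{BO} underlying Lemma~\ref{isomorphism}, together with the module action (\ref{module-action}).

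The main obstacle is the twisting bookkeeping in this last step: one must check that the single scalar $t^{\langle x'|y\rangle}$ emitted by $\sigma$, combined with the twists hidden in the twisted multiplication of $R\ot^t S$ when $x\ot^t y$ is collapsed, reproduces exactly the product of the separate collapses in $R$ and $S$. This is carried out by repeatedly invoking the homomorphism property of $t$ (additivity of $t^{\langle -|-\rangle}$ in each slot) to split off the cross terms, after which everything matches and all remaining signs trivialize. The symmetric identity $F^{\scr}_{P\ot^t Q} = (F^{\scr}_P\ot F^{\scr}_Q)\sigma$ is verified the same way, now with the second factor in $P_0\ot^t Q_0$ so that $p=q=0$ and again $(-1)^{jp}=1$; subtracting the two identities yields $F = (F^{\scl}_P\ot F^{\scl}_Q - F^{\scr}_P\ot F^{\scr}_Q)\sigma$, as claimed.
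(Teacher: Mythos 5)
Your proposal is correct and follows essentially the same route as the paper: a direct componentwise verification on homogeneous elements, matching supports and cancelling the twist scalars via the homomorphism property of $t$. Your preliminary splitting into the two halves $F^{\scl}_{P\ot^t Q}=(F^{\scl}_P\ot F^{\scl}_Q)\sigma$ and $F^{\scr}_{P\ot^t Q}=(F^{\scr}_P\ot F^{\scr}_Q)\sigma$ is a valid and slightly cleaner organization of the same case analysis (the supports do separate as you claim, and the sign $(-1)^{jp}$ in $\sigma$ is indeed trivial on each support), but the computational content coincides with the paper's proof.
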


\begin{proof}
We will sometimes write the product on $R\ot ^t S$ as 
concatenation rather than tensor product ($rs$ in place of 
$r\ot s$ for 
$r\in R$ and $s\in S$) when no confusion should arise, in order to
avoid cumbersome notation. 
In the following, we use the twisted commutativity
($sr = t^{\langle r \mid s \rangle}rs$ 
for $r\in R$, $s\in S$) 
of the factors in the twisted tensor product algebra.

We first check each map on input
$(r\ot r')\ot (s\ot s' ) \ot_{R\ot^t S}
(1\ot r'')\ot (1\ot s '')$ in degree 0, where $r,r',r''$ are
homogeneous elements in $R$, and similarly $s,s',s''$ in $S$. 
We find that $(F^{\scl}_P\ot F^{\scl}_Q - F^{\scr}_P\ot F^{\scr}_Q)\sigma$
applied to this input is 
$$
\begin{aligned}
& (F^{\scl}_P\ot F^{\scl}_Q - F^{\scr}_P\ot F^{\scr}_Q) 
   \big(t^{\langle r '' \mid ss'\rangle}
    (r\ot r')\ot_{R} (1\ot r'') \ot
    (s\ot s') \ot _{S} (1\ot s'') \big)\\
& = t^{\langle r''\mid ss'\rangle}
   \big( (rr'\ot r'')\ot (ss'\ot s'') 
      - (r\ot r'r'')\ot (s\ot s's'')\big)\\
   & \leftrightarrow t^{\langle r''\mid ss'\rangle}
    t^{- \langle r ''\mid ss'\rangle} rr'
    ss'\ot r''s'' 
    - t^{\langle r''\mid ss'\rangle}t^{-\langle 
   r'r''\mid s\rangle} rs\ot r'r''
    s's'' \\
   & = rr'ss'\ot r''s''
   - t^{\langle r''\mid s'\rangle} t^{-\langle r'
    \mid s\rangle} r s\ot r'r''s's'' , 
\end{aligned}
$$
where in the third line above, we have identified elements in $R^e\ot ^t S^e$ with those
in $(R\ot ^t S)^e$, via the twist (see the proof of \cite[Lemma~4.3]{BO}). 
On the other hand, 
$$
\begin{aligned}
&F((r\ot r')\ot(s\ot s')\ot_{R\ot ^t S}
    (1\ot r'')\ot (1\ot s'')) \\
 & \leftrightarrow F( t^{-\langle r'\mid s\rangle} rs
   \ot r's'\ot r''s'' ) \\
 & = t^{-\langle r'\mid s \rangle} rsr's'
   \ot r''s'' - t ^{-\langle r'\mid s\rangle}
   rs\ot r's'r''s''\\
  & = t^{-\langle r'\mid s\rangle} t^{\langle r'\mid s\rangle}
   rr'ss' \ot r''s''
   - t^{-\langle r'\mid s\rangle} t^{\langle r''\mid s'\rangle}
    rs\ot r'r''s's'' ,
\end{aligned}
$$
which is the same. 

Next we check in degree $(0,n)$ where $n>0$, i.e.\ in
$(P\ot Q)_0\ot _{R\ot^t S} (P\ot Q)_n$. This is the sum of all
$$
   (P_0\ot Q_0) \ot _{R\ot^t S} (P_i\ot Q_j)
$$
where $i+j=n$. 
First assume that $i>0$ and $j>0$. 
We evaluate on input $(r\ot r')\ot (s\ot s')
\ot_{R\ot^t S} (1\ot x\ot r'')\ot (1\ot y\ot s'')$.
After applying $\sigma$, we have
$$
\begin{aligned}
&(F^{\scl}_P\ot F^{\scl}_Q - F^{\scr}_P\ot F^{\scr}_Q) (t^{\langle x\ot r'' \mid s\ot s'
   \rangle} (r\ot r'\ot x\ot r'') \ot_{R\ot^t S}
   (s\ot s'\ot y\ot s'')) \\
 & = t^{\langle x\ot r''\mid s\ot s'\rangle}
   (rr'\ot x\ot r'')\ot(ss'\ot y\ot s'') - 0 \\
  & \leftrightarrow t^{\langle x\ot r''\mid s\ot s'\rangle}
    t^{-\langle r''\mid s s' y \rangle} 
   t^{-\langle x\mid ss'\rangle} rr'ss'
   \ot x\ot y\ot r''s'' \\
   & = t^{- \langle r''\mid y\rangle} rr'ss'
    \ot x\ot y \ot r''s'',
\end{aligned}
$$
while
$$
\begin{aligned}
 & F((r\ot r')\ot (s\ot s') \ot _{R\ot^t S}
  (1\ot x\ot r'')\ot (1\ot y \ot s''))\\
 & \leftrightarrow F(t^{-\langle r'\mid s\rangle }
    t^{-\langle r''\mid  y \rangle} rs\ot r's'
    \ot x \ot y \ot r''s'' ) \\
  & = t^{-\langle r'\mid s\rangle} 
   t^{-\langle r''\mid y\rangle} rsr's'
   \ot x\ot y \ot r''s''\\
  & = t^{-\langle r''\mid y\rangle} rr'ss'
   \ot x\ot y\ot r'' s'' ,
\end{aligned}
$$
which is the same. 

The case that $i=0$ and $j=n$ is similar, as is the case $i=n$ and $j=0$. 

The case of degree $(m,0)$ is similar. One may check that
degree $(m,n)$, where $m>0$ and $n>0$, works as well, both maps yielding 0. 
\end{proof}

We next use Lemma~\ref{F} to construct a contracting homotopy for $F_{P\ot ^t Q}$.

\begin{lemma}\label{phi}
Let $\phi_P$, $\phi_Q$ be contracting homotopies for $F_P$, $F_Q$, respectively. 
Let $\phi=\phi_{P\ot ^t Q} : (P\ot^t Q)\ot_{R\ot ^t S} (P\ot^t Q)\rightarrow P\ot ^t Q$ be
defined by 
$$ \phi := (\phi_P \ot F_Q^{\scl} + (-1)^{i+p} F_P^{\scr}\ot \phi_Q)\sigma $$
on $(P_i\ot ^t Q_j) \ot_{R\ot^t S} (P_p\ot ^t Q_q)$, 
where $\sigma$ is the isomorphism of Lemma~\ref{sigma-signed}. 
Then $\phi$ is a contracting homotopy for $F=F_{P\ot ^t Q}$, that is, 
$d(\phi) = F$. 
\end{lemma}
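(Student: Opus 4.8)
The plan is to move the entire computation across the chain isomorphism $\sigma$ of Lemma~\ref{sigma-signed} and then verify a Leibniz-type identity on the ``untwisted'' side. Write $\phi = \psi\sigma$, where
$$\psi := \phi_P\ot F_Q^{\scl} + (-1)^{i+p}F_P^{\scr}\ot\phi_Q$$
is regarded as a map $D := (P\ot_R P)\ot^t(Q\ot_S Q)\to P\ot^t Q$, and set $C := (P\ot^t Q)\ot_{R\ot^t S}(P\ot^t Q)$, with differentials $d_C$ and $d_D$. Since $\sigma\colon C\to D$ is a chain map (Lemma~\ref{sigma-signed}), we have $\sigma d_C = d_D\sigma$, and hence
$$d(\phi) = d_{P\ot^t Q}\,\psi\sigma + \psi\sigma\, d_C = d_{P\ot^t Q}\,\psi\sigma + \psi\, d_D\,\sigma = (d_{P\ot^t Q}\,\psi + \psi\, d_D)\,\sigma = d(\psi)\,\sigma,$$
where $d(\psi) := d_{P\ot^t Q}\,\psi + \psi\, d_D$ (cf.\ (\ref{eqn:ch})). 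By Lemma~\ref{F} we have $F = (F_P^{\scl}\ot F_Q^{\scl} - F_P^{\scr}\ot F_Q^{\scr})\sigma$, so since $\sigma$ is an isomorphism it suffices to prove the identity of $(R\ot^t S)^e$-module maps $D\to P\ot^t Q$
$$d(\psi) = F_P^{\scl}\ot F_Q^{\scl} - F_P^{\scr}\ot F_Q^{\scr}.$$

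I would verify this identity by evaluating $d(\psi)$ on a homogeneous element $a\ot b$, with $a\in (P\ot_R P)_N$ and $b\in (Q\ot_S Q)_M$, using the total-complex differentials as normalized above, namely $d_{P\ot^t Q}(u\ot v) = d^P(u)\ot v + (-1)^{|u|}u\ot d^Q(v)$ and the analogous formula for $d_D$, with all Koszul signs displayed explicitly (this is the convention under which $\psi$ carries the written factor $(-1)^{i+p}$). Expanding the pieces of $d_{P\ot^t Q}\,\psi(a\ot b)$ and of $\psi\, d_D(a\ot b)$ yields eight terms, which I would sort into four pairs according to which homotopy, $\phi_P$ or $\phi_Q$, occurs and whether a differential lands on it or on its neighbouring chain map. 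The two terms containing $\phi_P$ beside $F_Q^{\scl}$ combine, through $d(\phi_P)=F_P^{\scl}-F_P^{\scr}$, into $(F_P^{\scl}-F_P^{\scr})\ot F_Q^{\scl}$; the two terms in which a $Q$-differential meets $F_Q^{\scl}$ cancel because $F_Q^{\scl}$ is a chain map (Lemma~\ref{lem:FK}), their signs being $(-1)^{N+1}$ and $(-1)^{N}$; the two terms in which a $P$-differential meets $F_P^{\scr}$ cancel because $F_P^{\scr}$ is a chain map, their signs being $(-1)^{N}$ and $(-1)^{N-1}$; and the two terms containing $\phi_Q$ beside $F_P^{\scr}$ combine, through $d(\phi_Q)=F_Q^{\scl}-F_Q^{\scr}$, into $F_P^{\scr}\ot(F_Q^{\scl}-F_Q^{\scr})$. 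Adding the two surviving contributions and cancelling the two copies of $F_P^{\scr}\ot F_Q^{\scl}$ that telescope away leaves exactly $F_P^{\scl}\ot F_Q^{\scl}-F_P^{\scr}\ot F_Q^{\scr}$, as required.

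The step I expect to be the main obstacle is precisely this sign bookkeeping, and in particular seeing that the explicit factor $(-1)^{i+p}$ in the definition of $\psi$ is exactly what forces the two ``cross'' pairs --- those in which a differential strikes the chain maps $F_Q^{\scl}$ and $F_P^{\scr}$ --- to cancel rather than to reinforce. Everything rests on fixing one sign convention throughout (the naive tensor product of maps, with every Koszul sign written out, consistent with the stated form $d = d^P\ot\mathbf{1}+(-1)^i\mathbf{1}\ot d^Q$ of the differential); once that is fixed, the only external inputs needed are that $F_P^{\scl}, F_P^{\scr}, F_Q^{\scl}, F_Q^{\scr}$ are chain maps (Lemma~\ref{lem:FK}) together with the homotopy relations $d(\phi_P)=F_P$ and $d(\phi_Q)=F_Q$. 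One could instead argue bidegree-by-bidegree as in the proof of Lemma~\ref{F}, but the chain-level Leibniz computation above disposes of all bidegrees at once.
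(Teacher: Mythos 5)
Your proposal is correct and follows essentially the same route as the paper: both factor $\phi$ through $\sigma$ (using that $\sigma$ is a chain map from Lemma~\ref{sigma-signed}), expand $d(\phi)$ into eight terms, cancel the two cross pairs via the chain-map property of $F_P^{\scr}$ and $F_Q^{\scl}$ from Lemma~\ref{lem:FK}, assemble $(d\phi_P+\phi_P d)\ot F_Q^{\scl}+F_P^{\scr}\ot(d\phi_Q+\phi_Q d)=F_P\ot F_Q^{\scl}+F_P^{\scr}\ot F_Q$, telescope, and invoke Lemma~\ref{F}. The signs you record, $(-1)^{N+1}$ versus $(-1)^N$ and $(-1)^N$ versus $(-1)^{N-1}$, match the paper's $(-1)^{i+p+1}$, $(-1)^{i+p}$, $(-1)^{i+p}$, $(-1)^{i+p-1}$ exactly.
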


\begin{proof}
In the following calculation, the exponent $*$ in $(-1)^*$ varies and is determined
when needed. By Lemma~\ref{lem:FK}, the maps $F^{\scl}_P,F^{\scl}_Q,F^{\scr}_P,F^{\scr}_Q$ 
commute with the differentials. 
By the definition of $\phi$ on $(P_i\ot ^t Q_j)\ot _{R\ot ^t S} (P_p\ot ^t Q_q)$, 
\begin{eqnarray*} 
d(\phi) &:=& d \phi + \phi d \\
   & = & (d\ot \textbf{1} + (-1)^* \ot d) (\phi_P\ot F^{\scl}_Q + (-1)^{i+p} F_P^{\scr}\ot \phi_Q)\sigma \\
  &&\hspace{0.5cm} + (\phi_P\ot F_Q^{\scl} + (-1)^{*} F^{\scr}_P\ot \phi_Q )
     \sigma (d\ot \textbf{1} + (-1)^{i+p} \ot d)\\
  &=& \left(d \phi_P \ot F^{\scl}_Q + (-1)^{i+p} d F^{\scr}_P\ot \phi_Q + (-1)^{i+p+1}\phi_P\ot dF^{\scl}_Q 
      +  F^{\scr}_P\ot d\phi_Q\right. \\
   &&\hspace{0.2cm} \left. +\phi_P d \ot F^{\scl}_Q + (-1)^{i+p} \phi_P\ot F^{\scl}_Q d +
    (-1)^{i+p-1} F^{\scr}_P d \ot \phi_Q +  F^{\scr}_P\ot \phi_Q d
         \right)\sigma \\
  &=& \big((d\phi_P + \phi_P d)\ot F^{\scl}_Q + F^{\scr}_P\ot ( d\phi_Q + \phi_Q d)\big)\sigma\\
  &= & \big(F_P\ot F^{\scl}_Q + F^{\scr}_P\ot F_Q\big)\sigma\\
   & = & \big((F^{\scl}_P-F^{\scr}_P)\ot F^{\scl}_Q + F^{\scr}_P\ot 
        ( F^{\scl}_Q-F^{\scr}_Q)\big)\sigma\\
  &=& \big(F^{\scl}_P\ot F^{\scl}_Q - F^{\scr}_P\ot F^{\scr}_Q\big)\sigma .
\end{eqnarray*}
Now apply Lemma~\ref{F}. 
\end{proof}

As a consequence, the map $\phi$ given in Lemma~\ref{phi} may
be used to compute Gerstenhaber brackets on Hochschild cohomology of 
$R\ot ^t S$, under the isomorphism of complexes given by Lemma~\ref{sigma-signed},
provided Conditions~\ref{conditions}(a)--(c)  hold for $\K:=\Tot(P\ot^t Q)$. 
That is, under those conditions, 
$ [f,g]= f\circ g - (-1)^{(i-1)(j-1)} g\circ f $, where $i,j$ are the
homological degrees of $f,g$, respectively, and 
the circle product is given as in (\ref{circ-formula}) by  
$$f\circ g = f \phi (\mathbf{1}_{\K}\ot g \ot \mathbf{1}_{\K})
\Delta^{(2)}_{\K},$$
where the definition of the map $\mathbf{1}_{\K} \ot g \ot \mathbf{1}_{\K}$ 
involves Koszul signs,  and similarly $g\circ f$. 
We will use these formulas in the remainder of the paper.


\section{Maps for some quantum complete intersections}\label{section:maps}

Let  $q \in k^{\times}$, and  
let $$\Lambda =\Lambda_q:= k \left< x,y \right>/(x^2, y^2, xy+qyx),$$
a  Koszul algebra whose Hochschild cohomology was computed (as an algebra) 
by Buchweitz, Green, Madsen, and Solberg~\cite{BGMS}. 
In the next section, we compute its Gerstenhaber brackets, after defining 
all the needed maps in this section. 
We can identify $\Lambda$ 
as the twisted tensor product $R \ot^t_k S$, where $R := k[x]/(x^2)$, $S := k[y]/(y^2)$,
$A=B=\Z$, and $t: \Z \ot_{\Z} \Z \rightarrow k^{\times}$ is the homomorphism of abelian groups defined by 
$t(1 \ot_{\Z} 1) = -q^{-1}$.  (We take $ |x|=1, \ |y|=1$.)

We will use the techniques in \cite{NW}, in combination with our 
results in Section~\ref{Gbracket twisted}, to compute 
the Gerstenhaber brackets for $\Lambda = R\ot ^t S$. Let
$$ \K^x: \ \cdots \xrightarrow{\cdot u} R^e \xrightarrow{\cdot v} R^e   
\xrightarrow{\cdot u} R^e \xrightarrow{m} R \rightarrow 0 $$ 
be the $R^e$-projective resolution of $R$ where $u= x\ot 1 - 1\ot x$, $v= x\ot 1 + 1\ot x$, 
and $m$ is the multiplication map. 
Letting $\epsilon_i$ denote the element $1\ot 1$ of $R^e$ in homological degree $i$, we see that
we must give $\epsilon_i$ the graded degree $i$ as an element of $\Z$ as well, in order
for the differentials to be of graded degree 0. 
We may thus view the resolution $\K^x$ more precisely as a resolution of graded modules:
\begin{equation}\label{Kx}
  \K^x : \ \cdots \xrightarrow{\cdot u} R^e\langle 2\rangle 
     \xrightarrow{\cdot v} R^e\langle 1 \rangle \xrightarrow{\cdot u}
  R^e \xrightarrow{m} R\rightarrow 0 , 
\end{equation} 
the (standard) notation for the degree shift as in \cite{BO}. 
Similarly, we have the $S^e$-projective resolution  $\K^y$ of $S$. 
Take the total complex of $\K^x \ot^t \K^y$ and call it $\K$.
As explained in Section~\ref{Gbracket twisted} (setting $P =\K^x$,
$Q=\K^y$), the complex $\K:= \Tot (\K^x\ot ^t \K^y)$ is a graded projective
resolution of $\Lambda$ as a $\Lambda^e$-module.

Denote the generators of $\K_n$ as a $\Lambda^e$-module by 
$\{ \epsilon_{i,j} \}_{i+j=n}$, where $\epsilon_{i,j}:=\epsilon_i\ot \epsilon_j$,
that is, $\epsilon_{i,j}$ is the copy of $1 \ot 1$ with homological degree $i$ in $x$ and degree $j$ in $y$. 
One can check that after appropriate identifications, 
in all degrees $n=i+j$, the differentials are given by:
 $$d^{\K}_{i,j}: \epsilon_{i,j} \mapsto x\epsilon_{i-1,j} + (-1)^n q^j \epsilon_{i-1,j}x + q^i y \epsilon_{i,j-1} + (-1)^n \epsilon_{i,j-1} y. $$

Let $\B$ be the bar resolution of $\Lambda$ as defined in (\ref{bar-res}). 
Since $\Lambda$ is Koszul and $\K$ is a Koszul resolution, Conditions~\ref{conditions}(a)--(c)
hold (see \cite{BGSS,N,NW}).
Therefore we may indeed compute Gerstenhaber brackets using the techniques in~\cite{NW},
in combination with our results in Section~\ref{Gbracket twisted}.
We will need the following explicit formulas for some of the relevant maps: \\

\noindent 
{\bf The embedding chain map} $\iota: \K \rightarrow \B$.  
We have in low degrees, from \cite{BGMS}, 
\begin{align*}
\iota_0: \epsilon_{0,0} & \mapsto 1 \ot 1, \\
\iota_1: \epsilon_{0,1} & \mapsto 1 \ot y \ot 1, \\
             \epsilon_{1,0} & \mapsto 1 \ot x \ot 1, \\
\iota_2: \epsilon_{0,2} & \mapsto 1 \ot y \ot y \ot 1, \\            
             \epsilon_{1,1} & \mapsto 1 \ot x \ot y \ot 1 + q \ot y \ot x \ot 1, \\  
             \epsilon_{2,0} & \mapsto 1 \ot x \ot x \ot 1 ,\\        
\iota_3: \epsilon_{0,3} & \mapsto 1 \ot y \ot y \ot y \ot 1 ,\\     
             \epsilon_{1,2} & \mapsto 1 \ot x \ot y \ot y \ot 1 + q \ot y \ot x \ot y 
        \ot 1 +q^2 \ot y \ot y \ot x \ot 1, \\       
             \epsilon_{2,1} & \mapsto 1 \ot x \ot x \ot y \ot 1 + 
          q \ot x \ot y \ot x \ot 1 +q^2 \ot y \ot x \ot x \ot 1 ,\\       
             \epsilon_{3,0} & \mapsto 1 \ot x \ot x \ot x \ot 1. 
\end{align*}
In general, $\iota_n (\epsilon_{i,l})= \tilde{f}^{i+l}_l$ in the 
notation of \cite{BGMS}, where $n=i+l$, and this identifies our complex $\K$
with $\mathbb{P}$ of \cite{BGMS}, at the same time verifying Condition~\ref{conditions}(a). 

We will not need an explicit formula for a chain map $\pi: \B \rightarrow \K$
satisfying Condition~\ref{conditions}(b). This is an advantage of our approach, in
comparison with traditional methods. Existence of $\pi$ is guaranteed by the observation
that, for each $n$, the image of $\{\epsilon_{i,l}\mid i+l=n\}$ under $\iota_n$ in $\B$
may be extended to a free $\Lambda^e$-basis of $\B_n$.\\

\noindent 
{\bf The diagonal map} $\Delta_{\K}: \K \rightarrow \K \ot_{\Lambda} \K$.
Condition~\ref{conditions}(c) states that this map must 
 satisfy the relation $\Delta_{\B} \circ \iota = (\iota \ot_{\Lambda} \iota) \circ \Delta_{\K}$, where 
$\Delta_{\B}: \B \rightarrow \B \ot_{\Lambda} \B$ is given by
(\ref{bar-diagonal}).
By \cite[p.\ 810]{BGMS}, via the identification $\tilde{f}^n_l \leftrightarrow \epsilon_{i,l}$ ($i+l=n$),
such a diagonal map is given by 
$$\Delta_{\K}(\epsilon_{i,l}) = \sum_{w=0}^{i+l} \ \sum_{j=max\{0,-i+w\} }^{min\{w,l\} } 
      q^{j(i+j-w)} \epsilon_{w-j,j} \ot_{\Lambda} \epsilon_{i+j-w,l-j}. $$ 
We will write out lower degree terms that are needed for some of our calculations:
\begin{align*}
(\Delta_{\K})_0: \epsilon_{0,0} & \mapsto \epsilon_{0,0} \ot_{\Lambda} \epsilon_{0,0}, \\
(\Delta_{\K})_1: \epsilon_{0,1} & \mapsto \epsilon_{0,0} \ot_{\Lambda} \epsilon_{0,1} + \epsilon_{0,1} \ot_{\Lambda} \epsilon_{0,0}, \\
                      \epsilon_{1,0} & \mapsto \epsilon_{0,0} \ot_{\Lambda} \epsilon_{1,0} + \epsilon_{1,0} 
   \ot_{\Lambda} \epsilon_{0,0} ,\\                     
(\Delta_{\K})_2: \epsilon_{0,2} & \mapsto \epsilon_{0,0} \ot_{\Lambda} \epsilon_{0,2} + \epsilon_{0,1} \ot_{\Lambda} \epsilon_{0,1} + \epsilon_{0,2} \ot_{\Lambda} \epsilon_{0,0}, \\         
                      \epsilon_{1,1} & \mapsto \epsilon_{0,0} \ot_{\Lambda} \epsilon_{1,1} + \epsilon_{1,0} 
\ot_{\Lambda} \epsilon_{0,1} + q\epsilon_{0,1} \ot_{\Lambda} \epsilon_{1,0}  
         + \epsilon_{1,1} \ot_{\Lambda} \epsilon_{0,0}, \\    
                      \epsilon_{2,0} & \mapsto \epsilon_{0,0} \ot_{\Lambda} \epsilon_{2,0} + \epsilon_{1,0} 
 \ot_{\Lambda} \epsilon_{1,0} + \epsilon_{2,0} \ot_{\Lambda} \epsilon_{0,0}, \\            
(\Delta_{\K})_3: \epsilon_{0,3} & \mapsto \epsilon_{0,0} \ot_{\Lambda} \epsilon_{0,3} + \epsilon_{0,1}  
  \ot_{\Lambda} \epsilon_{0,2} + \epsilon_{0,2} \ot_{\Lambda} \epsilon_{0,1} + \epsilon_{0,3} 
   \ot_{\Lambda} \epsilon_{0,0}, \\     
                      \epsilon_{1,2} & \mapsto \epsilon_{0,0} \ot_{\Lambda} \epsilon_{1,2} + \epsilon_{1,0} 
  \ot_{\Lambda} \epsilon_{0,2} + q\epsilon_{0,1} \ot_{\Lambda} \epsilon_{1,1} + \epsilon_{1,1} 
   \ot_{\Lambda} \epsilon_{0,1} \\
                      & + q^2\epsilon_{0,2} \ot_{\Lambda} \epsilon_{1,0} + \epsilon_{1,2} 
   \ot_{\Lambda} \epsilon_{0,0} ,\\                 
                      \epsilon_{2,1} & \mapsto \epsilon_{0,0} \ot_{\Lambda} \epsilon_{2,1} + \epsilon_{1,0} 
      \ot_{\Lambda} \epsilon_{1,1} + q^2\epsilon_{0,1} \ot_{\Lambda} \epsilon_{2,0} + \epsilon_{2,0} 
  \ot_{\Lambda} \epsilon_{0,1} \\ 
                      & + q\epsilon_{1,1} \ot_{\Lambda} \epsilon_{1,0} + \epsilon_{2,1} 
   \ot_{\Lambda} \epsilon_{0,0} ,\\    
                      \epsilon_{3,0} & \mapsto \epsilon_{0,0} \ot_{\Lambda} \epsilon_{3,0} + \epsilon_{1,0} 
  \ot_{\Lambda} \epsilon_{2,0} + \epsilon_{2,0} \ot_{\Lambda} \epsilon_{1,0} + \epsilon_{3,0} 
   \ot_{\Lambda} \epsilon_{0,0}. 
\end{align*}

Next  we construct  maps $\phi: \K \ot_{R\ot ^t S} \K\rightarrow \K$, using Lemma~\ref{phi}, 
that we will need to compute brackets
via the method in \cite{NW}.
We will first need such maps for each of the factor algebras $R$ and $S$.
The following lemma is straightforward to check.

\begin{lemma}\label{lem:R}
Letting $R=k[x]/(x^2)$ and $\K^x$ as defined in (\ref{Kx}),
the following map is a contracting homotopy for $F_{\K^x}$: 
$$
    \phi_{i+j} (\epsilon_i\ot x^m \epsilon_j) = \delta_{m,1}
     (-1)^i \epsilon_{i+j+1}.
$$
\end{lemma}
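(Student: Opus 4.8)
The plan is to verify directly that $d(\phi)=d_{\K^x}\phi+\phi\, d_{\K^x\ot_R\K^x}$ coincides with $F_{\K^x}=F^{\scl}_{\K^x}-F^{\scr}_{\K^x}$ as a map $\K^x\ot_R\K^x\to\K^x$. Since $\phi$, $F_{\K^x}$ and all the differentials are $R^e$-module maps, it suffices to test the identity on the $R^e$-generators of $(\K^x\ot_R\K^x)_n$, namely on the elements $\epsilon_i\ot x^m\epsilon_j$ with $i+j=n$ and $m\in\{0,1\}$ (the middle copy of $R=k[x]/(x^2)$ has basis $\{1,x\}$). First I would record the two ingredients. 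The differential of $\K^x$ on generators is $d_k(\epsilon_k)=x\epsilon_{k-1}+(-1)^k\epsilon_{k-1}x$, so in particular $d_{n+1}(\epsilon_{n+1})=x\epsilon_n+(-1)^{n+1}\epsilon_n x$. Reading the defining formulas for $F^{\scl}$ and $F^{\scr}$ from Section~\ref{preliminaries} off on $\epsilon_i\ot x^m\epsilon_j$ gives $F_{\K^x}=0$ when $i,j>0$, $F_{\K^x}=x^m\epsilon_j$ when $i=0<j$, $F_{\K^x}=-\epsilon_i x^m$ when $j=0<i$, and $F_{\K^x}=x^m\epsilon_0-\epsilon_0 x^m$ when $i=j=0$.

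Next I would compute $d_{\K^x}\phi$ and $\phi\,d$ separately on each generator. The term $d_{\K^x}\phi$ is immediate from $\phi(\epsilon_i\ot x^m\epsilon_j)=\delta_{m,1}(-1)^i\epsilon_{i+j+1}$ together with the formula for $d_{i+j+1}$. For $\phi\,d$ I would apply the total differential $d_i\ot\mathbf{1}+(-1)^i\mathbf{1}\ot d_j$, then re-express each resulting summand in the standard form $\epsilon_a\ot x^c\epsilon_b$ before applying $\phi$: outer factors of $x$ get absorbed into the left/right $R^e$-action (and pulled back out after $\phi$, which is $R^e$-linear), while a factor of $x$ attached on the inside merges across $\ot_R$ into the shared middle factor, sending $x^m\mapsto x^{m+1}$. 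The only bookkeeping point here is the identity $\delta_{m+1,1}=\delta_{m,0}$, which records both that multiplying the middle factor by $x$ raises the exponent and that $x^2=0$ kills it when $m=1$.

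In the interior case $i,j>0$ the computation splits by $m$: the $\delta_{m,0}$-contributions produced inside $\phi\,d$ cancel among themselves, while for $m=1$ the coefficients of $x\epsilon_n$ and of $\epsilon_n x$ cancel between $d_{\K^x}\phi$ and $\phi\,d$ (using $n=i+j$ to rewrite the signs), so $d(\phi)=0=F_{\K^x}$. The main point, and the only place requiring genuine care, is the boundary: when $i=0$ or $j=0$ (and the corner $i=j=0$), the resolution $\K^x$ is truncated below homological degree $0$, so exactly one of the two summands $d_i\ot\mathbf{1}$, $(-1)^i\mathbf{1}\ot d_j$ of the total differential is absent. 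This breaks the interior cancellation in precisely the right way: the surviving terms no longer sum to zero but instead reproduce the nonzero values $x^m\epsilon_j$, $-\epsilon_i x^m$, and $x^m\epsilon_0-\epsilon_0 x^m$ recorded above, matching $m=0$ and $m=1$ case by case. I would finish by carrying out these short boundary computations and observing the term-by-term agreement. There is no conceptual obstacle beyond organizing the sign and $\delta$-bookkeeping so that the broken cancellations at the boundary land on the correct values of $F_{\K^x}$.
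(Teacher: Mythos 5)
Your proposal is correct: the paper states this lemma without proof (``the following lemma is straightforward to check''), and your direct verification of $d_{\K^x}\phi+\phi\,d=F^{\scl}_{\K^x}-F^{\scr}_{\K^x}$ on the generators $\epsilon_i\ot x^m\epsilon_j$, with the interior cancellations and the boundary cases $i=0$, $j=0$, $i=j=0$ producing exactly $x^m\epsilon_j$, $-\epsilon_i x^m$, and $x^m\epsilon_0-\epsilon_0 x^m$, is precisely the check the paper leaves to the reader. The sign and $\delta$-bookkeeping you describe (including $\delta_{m+1,1}=\delta_{m,0}$ from $x^2=0$) all works out as claimed.
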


Letting $S=k[y]/(y^2)$, 
similarly we obtain a  contracting homotopy for $F_{\K^y}$. 
As a consequence of Lemmas~\ref{phi} and \ref{lem:R}, a contracting
homotopy $\phi := \phi_{R\ot ^t S}$ of $F_{\K}$ is as follows: 
To evaluate $\phi$ on $\epsilon_{i,j}\ot_{R\ot^t S}
 x^{l}y^m \epsilon_{p,r}$, we first apply the isomorphism 
$$
 \sigma:    (\K^x\ot ^t \K^y)\ot _{R\ot ^t S} (\K^x\ot ^t \K^y) \stackrel{\sim}{\longrightarrow}
    (\K^x\ot_{R} \K^x ) \ot ^t (\K^y\ot _{S} \K^y) 
$$
of Lemma \ref{sigma-signed}. 
Then 
$$
\begin{aligned}
& \phi( \epsilon_{i,j} \ot _{R\ot ^t S} x^{l} y^m
    \epsilon_{p,r}) \\
& = \phi \left( ( \epsilon_i\ot \epsilon_j ) \ot _{R\ot ^t S}     
     ((x^{l}\ot
    y^m ) ( \epsilon_p \ot \epsilon_r))\right)\\
& = \phi \left( (\epsilon_i\ot \epsilon_j) \ot t^{\langle \epsilon_p
   \mid y^m\rangle} (x^{l}\epsilon_p\ot y^m \epsilon_r) \right)\\
& = (\phi_{\K^x}\ot F^{\scl} + (-1)^{i+p} F^{\scr}\ot \phi_{\K^y})
   \left( t^{\langle \epsilon_p\mid y^m \rangle}
    t^{\langle x^{l} \epsilon_p\mid \epsilon_j\rangle} 
    (\epsilon_i\ot x^{l}\epsilon_p)\ot (\epsilon_j\ot y^m \epsilon_r)
      \right)\\
& = 
    t^{\langle \epsilon_p\mid y^m\rangle}
    t^{\langle x^{l} \epsilon_p\mid\epsilon_j\rangle} \left(
      \delta_{l,1} (-1)^i \epsilon_{i+p+1}\ot \delta_{j,0} y^m \epsilon_r 
     + (-1)^{i+p} \delta_{p,0} \epsilon_i x^{l} \ot \delta_{m,1}
   (-1)^j \epsilon_{j+r+1} \right)\\
& = (-q^{-1})^{pm+(l +p) j} 
   \left(\delta_{j,0}\delta_{l,1} (-1)^i \epsilon_{i+p+1} \ot y^m \epsilon_r
   + \delta_{p,0} \delta_{m,1} (-1)^{ i+p+j} \epsilon_i x^{l} 
   \ot \epsilon_{j+r+1} \right) . 
\end{aligned}
$$
If $j=0$ and $p>0$, by recalling the bimodule action of $R\ot^t S$
on $\K^x\ot^t \K^y$, this is
\begin{eqnarray*}
  (-q^{-1})^{pm}\delta_{l,1} (-1)^i\epsilon_{i+p+1}\ot y^m\epsilon_r
   &=& (-q^{-1})^{pm}\delta_{l,1} (-1)^i (-q)^{(i+p+1)m} y^m 
     \epsilon_{i+p+1,r} \\
   &=& (-q)^{(i+1)m} (-1)^i \delta_{l,1} y^m \epsilon_{i+p+1,r} .
\end{eqnarray*}
Similarly, if $j=0$, $p=0$, then we have
$$
\begin{aligned}
  &\delta_{l,1}(-1)^i \epsilon_{i+1}\ot y^m \epsilon_r +
    \delta_{m,1}(-1)^i \epsilon_i x^{l}\ot \epsilon_{r+1}\\
   &= (-q)^{(i+1)m} \delta_{l,1} (-1)^i y^m \epsilon_{i+1,r} 
   + (-q)^{l(r+1)}\delta_{m,1}(-1)^i \epsilon_{i,r+1}x^{l}\\
   &= (-q)^{(i+1)m} \delta_{l,1}(-1)^iy^m\epsilon_{i+1,r}
   + (-q)^{l(r+1)} (-1)^i\delta_{m,1}\epsilon_{i,r+1}x^{l} . 
\end{aligned}
$$
If $j>0$, $p=0$, we have
\begin{eqnarray*}
(-q^{-1})^{l j} \delta_{m,1}(-1)^{i+j} \epsilon_i x^{l}\ot\epsilon_{j+r+1}
    &=& (-q^{-1})^{l j} (-q)^{l (j+r+1)}\delta_{m,1}(-1)^{i+j}
   \epsilon_{i,j+r+1} x^{l} \\
   &=& (-q)^{l (r+1)} \delta_{m,1}(-1)^{i+j} \epsilon_{i,j+r+1}x^{l} .
\end{eqnarray*} 
If $j>0$, $p>0$, we have 0. 
To summarize, the contracting homotopy $\phi$ is 

\begin{math} 
\hspace{-.2cm}\phi ( \epsilon_{i,j} \otimes_{\Lambda} x^l y^m \epsilon_{p,r} ) \! = \!
\begin{cases}
(-q)^{mi+m} \delta_{l,1} (-1)^i y^m \epsilon_{i+p+1, r} , & \textrm{if } j=0, p>0 \\
(-q)^{mi+m} \delta_{l,1} (-1)^i y^m \epsilon_{i+1, r}\! +\! (-q)^{lr+l} \delta_{m,1} (-1)^i \epsilon_{i, r+1} x^l ,
    & \textrm{if } j=0, p=0 \\
(-q)^{lr+l} \delta_{m,1} (-1)^{i+j} \epsilon_{i, j+r+1} x^l ,& \textrm{if } j>0, p=0 \\
0 ,& \textrm{otherwise.}
\end{cases}
\end{math}


\section{Brackets for some quantum complete intersections}\label{sec:qci}

In this section, we will compute the Gerstenhaber brackets on the Hochschild cohomology of $\Lambda =\Lambda_q:= k \left< x,y \right>/(x^2, y^2, xy+qyx),$ using the technique and maps described in previous sections. We consider various cases of $q  \in k^{\times}$. 

\subsection{$q$ is not a root of unity  (\cite[2.1]{BGMS})}

As computed in \cite[2.1]{BGMS}, 
$$\HH^*(\Lambda) \cong k[xy]/((xy)^2) \times_k \Wedge^{\bu}(u_0,u_1),$$
the fiber product of rings,  where $u_0=(x,0)$, $u_1=(0,y)$ are of 
(homological) degree~$1$, and $k[xy]/((xy)^2)$ is the center of $\Lambda$ (homological degree~0). 
That is, $\HH^*(\Lambda)$ is the subring of $k[xy]/((xy)^2) \oplus
\Wedge^{\bu}(u_0,u_1)$ consisting of pairs $(\zeta,\xi)$ such that the images of $\zeta$
and $\xi$ under the respective augmentation maps are equal.
(Here, $xy$, $u_0$, and $u_1$ are in the kernels of their respective augmentation maps.) 
After translating the notation of~\cite{BGMS} to that of our Section~\ref{section:maps}, we may identify $u_0= x\epsilon_{1,0}^*$ and $u_1= y\epsilon_{0,1}^*$, where $\epsilon_{i,l}^*(\epsilon_{j,m}) = \delta_{i,j} \delta_{l,m}$. Hence, we need to compute the circle products for pairs of elements from 
the set of algebra generators 
$\{xy, x\epsilon_{1,0}^*, y\epsilon_{0,1}^* \}$. The rest will follow using (\ref{graded-derivation}). 
Applying the formula (\ref{circ-formula}), we have the following:
\begin{align*}
(x\epsilon_{1,0}^* \circ x\epsilon_{1,0}^*)(\epsilon_{1,0}) &=  x ,\\
(x\epsilon_{1,0}^* \circ y\epsilon_{0,1}^*)(\epsilon_{0,1}) &= 0 , \\
(y\epsilon_{0,1}^* \circ x\epsilon_{1,0}^*)(\epsilon_{1,0}) &= 0 , \\
(y\epsilon_{0,1}^* \circ y\epsilon_{0,1}^*)(\epsilon_{0,1}) &= y. 
\end{align*}
The corresponding Gerstenhaber brackets are thus all 0. 
Non-zero brackets arising when pairing generators with the degree 0 element $xy$ are: 
$$
   [ x \epsilon_{1,0}^* , xy ] = xy \ \ \ \mbox{ and } \ \ \ 
   [y\epsilon_{0,1}^*, xy] = xy .
$$


\subsection{${\rm {char}} (k) \neq 2$ and $q=-1$ (\cite[3.4]{BGMS})} 
In this case, $\Lambda \cong R \ot S$ is just the usual tensor product, and 
$\HH^*(\Lambda) \cong \HH^*(R) \ot \HH^*(S)$,  a graded
tensor product of algebras.
This isomorphism preserves the Gerstenhaber structure, as expected from \cite[Theorem 3.3]{LZ}. 
We give details next.

By \cite[3.4]{BGMS}, after translating the notation  to ours, 
$$\HH^{\bu}(\Lambda) \cong (\Lambda \otimes \Wedge^{\bu}
 (x \epsilon_{1,0}^*, y \epsilon_{0,1}^* ))[ \epsilon_{2,0}^*, \epsilon_{0,2}^*]/( x(x \epsilon_{1,0}^*) , 
y(y \epsilon_{0,1}^*), x \epsilon_{2,0}^*, y \epsilon_{0,2}^*) . $$ 
We will   compute circle products of pairs of elements from the set of generators $\{x, y, x  \epsilon_{1,0}^*,  y \epsilon_{0,1}^*,  \epsilon_{2,0}^*,  \epsilon_{0,2}^* \}$.  
The rest will follow by applying (\ref{graded-derivation}). 
By (\ref{circ-formula}), direct computation gives that the non-zero circle products among these pairs of generators are

\begin{align*}
(x  \epsilon^*_{1,0} \circ x  ) (\epsilon_{0,0}) &=  x , \\
( y \epsilon^*_{0,1} \circ  y) (\epsilon_{0,0}) & = y , \\
( x  \epsilon_{1,0}^* \circ x  \epsilon_{1,0}^*)(\epsilon_{1,0}) &= x  ,\\
( y \epsilon_{0,1}^* \circ  y \epsilon_{0,1}^*)(\epsilon_{0,1}) &=  y ,\\
(  \epsilon_{2,0}^* \circ x  \epsilon_{1,0}^*)(\epsilon_{2,0}) &= 2 , \\
( \epsilon_{0,2}^* \circ  y \epsilon_{0,1}^*)(\epsilon_{0,2}) &= 2 . \\
\end{align*}

\noindent
 Therefore the non-zero Gerstenhaber brackets on generators of $\HH^{\bu}(\Lambda)$ are 

\begin{align*}
[x  \epsilon^*_{1,0} , x  ]  &=  x , \\
[ y \epsilon^*_{0,1} ,  y]  & = y , \\
[ x  \epsilon_{1,0}^* ,  \epsilon_{2,0}^* ] &= -2\epsilon_{2,0}^* ,\\
[  y \epsilon_{0,1}^* , \epsilon_{0,2}^* ] &= -2 \epsilon_{0,2}^*. \\
\end{align*}

By \cite[Proposition-Definition~2.2]{LZ} as summarized in (\ref{tensor}) below,
since $\Lambda\cong R\ot S$, 
we can alternatively use the Gerstenhaber bracket structure on 
$\HH^{\bu}(R) \cong \HH^{\bu}(S)$ to compute the brackets  on $\HH^{\bu}(\Lambda)$.
We outline this approach next, for comparison. 

For brevity, we will suppress the steps and show only the results.  
By \cite[3.4]{BGMS}, we know $\HH^{\bu}(R) \cong R\times_k \Wedge^*(x \epsilon_1^*)
[\epsilon_2^*]$.  Of the brackets we need to confirm our computations, the non-zero brackets among generators are 
$$[ x \epsilon_1^* , x  ] =  x 
\ \ \ \mbox{ and } \ \ \ [ \epsilon_2^* , x \epsilon_1^* ] = 2 \epsilon_2^* ,
$$
as may be computed using~(\ref{circ-formula}) and Lemma~\ref{lem:R}. 
The Hochschild cohomology 
$\HH^{\bu}(S)$ will have the same Gerstenhaber bracket structure. By direct computation using (\ref{tensor}) below and this bracket structure, we again find that the non-zero Gerstenhaber brackets on our choice of generators in $\HH^{\bu}(\Lambda)$ are
\begin{align*}
[x \epsilon_1^*\ot 1 , x\ot 1] &=  x\ot 1 , \\
[1\ot y\epsilon_1^* , 1\ot y] &= 1\ot  y , \\
[ x \epsilon_1^*\ot 1  , \epsilon_2^*\ot 1  ] &= 
-2\epsilon_2^*\ot 1 ,\\
[ 1 \ot y \epsilon_{1}^* , 1\ot \epsilon_{2}^* ] &= 
  -2\ot  \epsilon_{2}^*,
\end{align*}
confirming our earlier calculations.


\subsection{${{\rm{char}}}(k)\neq 2$ and 
$q$ is an odd root of unity  (\cite[3.1]{BGMS})}\label{r-odd} 

Let $q$ be a primitive $r$th root of unity, $r$ odd. 
By \cite[3.1]{BGMS}, translated into our notation, 
$$\HH^{\bu}(\Lambda) 
\cong k[xy]/((xy)^2) \times_k (\Wedge^{\bu}(x \epsilon_{1,0}^*, y \epsilon_{0,1}^*)[ \epsilon_{2r, 0}^*, \epsilon_{r, r}^*, \epsilon_{0, 2r}^*]/(\epsilon_{2r, 0}^* \epsilon_{0, 2r}^* - (\epsilon_{r,r}^*)^2)) . $$
Thus we need to calculate the brackets on pairs of elements from the set 
$$\{xy , x \epsilon_{1,0}^*, y \epsilon_{0,1}^*, \epsilon_{2r, 0}^*, \epsilon_{r,r}^*, \epsilon_{0, 2r}^* \} . $$
The rest will follow by applying (\ref{graded-derivation}).
Of these pairs, the non-zero Gerstenhaber brackets are
\begin{align*}
[x \epsilon_{1,0}^*,
xy ]
&=   xy  ,\\
[y \epsilon_{0,1}^*,
xy ]
&=   xy  ,\\
[\epsilon_{2r,0}^*, 
x \epsilon_{1,0}^*]
&= 2r\epsilon_{2r,0}^* ,\\
[\epsilon_{r,r}^*,
x \epsilon_{1,0}^*]
&= r\epsilon_{r,r}^* ,\\
[\epsilon_{r,r}^*,
y \epsilon_{0,1}^*]
&= r \epsilon_{r,r}^* ,\\
[\epsilon_{0,2r}^*,
y \epsilon_{0,1}^*]
&= 2r\epsilon_{0,2r}^* .\\
\end{align*}
Other brackets may be computed using (\ref{graded-derivation}), for example,
$$
  [ (\epsilon_{2r,0}^*)^2,  x \epsilon_{1,0}^* ] = 
   [\epsilon_{2r,0}^*, x\epsilon_{1,0}^*]\smile \epsilon_{2r,0}^* + 
  \epsilon_{2r,0}^*\smile [ \epsilon_{2r,0}^* , x\epsilon_{1,0}^*] 
   = 4 r (\epsilon_{2r,0}^*)^2.
$$


\subsection{${{\rm{char}}}(k)\neq 2$, $q$ is an even root of unity, and $q\neq 1$;  
 or {\rm char}$(k) = 2$,
$q$ is a root of unity, and $q\neq 1$ (\cite[3.2]{BGMS})}

As computed in \cite[3.2]{BGMS}, $$\HH^{\bu}(\Lambda) 
\cong k[xy]/((xy)^2) \times_k (\Wedge^{\bu}(x \epsilon_{1,0}^*, y \epsilon_{0,1}^*)[ \epsilon_{r, 0}^*, \epsilon_{0, r}^* ] . $$  Hence we need to compute brackets on pairs of elements from the generating set $$\{xy , x \epsilon_{1,0}^*, y \epsilon_{0,1}^*, \epsilon_{r,0}^*, \epsilon_{0,r}^* \} . $$  Of these pairs, the non-zero Gerstenhaber brackets are

\begin{align*}
[x \epsilon_{1,0}^*,
xy ]
&=   xy  ,\\
[y \epsilon_{0,1}^*,
xy ]
&=   xy  ,\\
[\epsilon_{r,0}^*, 
x \epsilon_{1,0}^*]
&=  r\epsilon_{r,0}^* ,\\
[\epsilon_{0,r}^*,
y \epsilon_{0,1}^*]
&=  r\epsilon_{0,r}^* .\\
\end{align*}


\subsection{{\rm char}$(k) = 2$ and $q = 1$ (\cite[3.3]{BGMS})}

As computed in \cite[3.3]{BGMS}, $$\HH^{\bu}(\Lambda) 
\cong \Lambda [\epsilon_{1,0}^*, \epsilon_{0,1}^*] . $$  
We will compute brackets on pairs of elements from the set $$\{x, y ,  \epsilon_{1,0}^*, \epsilon_{0,1}^* \} . $$  The non-zero Gerstenhaber brackets on generators are 

\begin{align*}
[x  ,  \epsilon_{1,0}^*] &= 1  ,\\
[ y ,  \epsilon_{0,1}^*] &= 1   .\\
\end{align*}
Again, $\Lambda$ is a tensor product of algebras, and the above brackets
may be found alternatively by using  formula~(\ref{tensor}) below, due to  Le and 
Zhou~\cite{LZ}. 
Note that even though many brackets on pairs of generators are 0, there are
many non-zero brackets, for example, using (\ref{graded-derivation}) we find that
$[x\epsilon_{1,0}^*, \epsilon_{1,0}^*] = \epsilon_{1,0}^*$. 


\subsection{{\rm char}$(k) \neq 2$ and $q = 1$ (\cite[3.5]{BGMS})}

As computed in \cite[3.5]{BGMS}, 
$$\HH^{\bu}(\Lambda) 
\cong \big(k[xy]/((xy)^2) \times_k \Wedge^{\bu}(x \epsilon_{1,0}^*, y \epsilon_{1,0}^*, x \epsilon_{0,1}^*, y \epsilon_{0,1}^*)\big)[ \epsilon_{2, 0}^*, \epsilon_{1, 1}^*, \epsilon_{0, 2}^* ] / I  $$  
where $I$ is generated by $x \epsilon_{1,0}^* x \epsilon_{0,1}^*$, $\ y \epsilon_{1,0}^* y \epsilon_{0,1}^*$, $\ x \epsilon_{1,0}^* y \epsilon_{1,0}^* -xy \epsilon_{2,0}^*$, $\ x \epsilon_{1,0}^* y \epsilon_{0,1}^* -xy \epsilon_{1,1}^*$, $\ x \epsilon_{0,1}^* y \epsilon_{0,1}^* -xy \epsilon_{0,2}^*$, $\ y \epsilon_{1,0}^* x \epsilon_{0,1}^* +xy \epsilon_{1,1}^*$, $\ x \epsilon_{1,0}^* \epsilon_{1,1}^* - x \epsilon_{0,1}^* \epsilon_{2,0}^*$, $\ y \epsilon_{1,0}^* \epsilon_{1,1}^* - y \epsilon_{0,1}^* \epsilon_{2,0}^*$, $\ x \epsilon_{1,0}^* \epsilon_{0,2}^* - x \epsilon_{0,1}^* \epsilon_{1,1}^*$, 
$\ y \epsilon_{1,0}^* \epsilon_{0,2}^* - y \epsilon_{0,1}^* \epsilon_{1,1}^*$, $\ \epsilon_{2,0}^* \epsilon_{0,2}^* - (\epsilon_{1,1}^*)^2$.  We will compute brackets on pairs of elements from the set $$\{xy , x \epsilon_{1,0}^*, y \epsilon_{1,0}^*, x \epsilon_{0,1}^*, y \epsilon_{0,1}^*,  \epsilon_{2,0}^*,  \epsilon_{1,1}^*,  \epsilon_{0,2}^* \} . $$  Of these pairs, the non-zero Gerstenhaber brackets are

\begin{align*}
[xy , x \epsilon_{1,0}^*] &= -  xy  ,\\
[xy , y \epsilon_{0,1}^*] &= -  xy  ,\\
[xy ,  \epsilon_{2,0}^*] &= -2   y \epsilon_{1,0}^* ,\\
[xy ,  \epsilon_{1,1}^*] &= -  y \epsilon_{0,1}^* +   x \epsilon_{1,0}^* ,\\
[xy , \epsilon_{0,2}^*] &= 2   x \epsilon_{0,1}^* ,\\
[x \epsilon_{1,0}^*, y \epsilon_{1,0}^*] &= - y \epsilon_{1,0}^* ,\\
[x \epsilon_{1,0}^*, x \epsilon_{0,1}^*] &= x \epsilon_{0,1}^* ,\\
[y \epsilon_{1,0}^*, x \epsilon_{0,1}^*] &= y \epsilon_{0,1}^* - x \epsilon_{1,0}^*,\\
[y \epsilon_{1,0}^*, y \epsilon_{0,1}^*] &= -y \epsilon_{1,0}^* ,\\
[x \epsilon_{0,1}^*, y \epsilon_{0,1}^*] &= x \epsilon_{0,1}^* ,\\
[x \epsilon_{1,0}^*, \epsilon_{2,0}^*] &=  -2 \epsilon_{2,0}^* ,\\
[x \epsilon_{1,0}^*,  \epsilon_{1,1}^*] &=  -  \epsilon_{1,1}^* ,\\
[y \epsilon_{1,0}^*,  \epsilon_{1,1}^*] &= -  \epsilon_{2,0}^* ,\\
[y \epsilon_{1,0}^*, \epsilon_{0,2}^*] &= - 2 \epsilon_{1,1}^* ,\\
[x \epsilon_{0,1}^*,  \epsilon_{2,0}^*] &= -2 \epsilon_{1,1}^* ,\\
[x \epsilon_{0,1}^*,  \epsilon_{1,1}^*] &=  -  \epsilon_{0,2}^* ,\\
[y \epsilon_{0,1}^*,  \epsilon_{1,1}^*] &=  -  \epsilon_{1,1}^* ,\\
[y \epsilon_{0,1}^*,  \epsilon_{0,2}^*] &=  -2\epsilon_{0,2}^* .\\
\end{align*}
Again, we may use (\ref{graded-derivation}) to compute other brackets,
e.g., $[\epsilon_{2,0}^*, xy \epsilon_{2,0}^*] = -2y\epsilon_{1,2}^*$. 


\section{A Gerstenhaber algebra isomorphism}\label{Galgiso}

We return to the general setting of a twisted tensor product $\Lambda = R\ot ^t S$, where $R$ and $S$ are graded by abelian groups $A$ and $B$ respectively, as defined in Section~\ref{Gbracket twisted}.
Our main result is Theorem~\ref{main-theorem} below, which generalizes the
main theorem of Le and Zhou~\cite{LZ} to the twisted setting of Bergh and Oppermann~\cite{BO}. 
The result of \cite{LZ} involves the known algebra isomorphism of the
Hochschild cohomology of a tensor product of algebras with the graded tensor product of their
Hochschild cohomologies, which is valid under some finiteness assumptions (see
Mac Lane~\cite[Theorem~X.7.4]{M}, who cites Rose \cite{R} with the first proof).
Le and Zhou show that this isomorphism of algebras preserves Gerstenhaber brackets,
so that it is in fact an isomorphism of Gerstenhaber algebras. Our result
more generally takes Bergh and Oppermann's algebra isomorphism  from a subalgebra of
Hochschild cohomology of a {\em twisted} tensor product of algebras to a
tensor product of subalgebras of their Hochschild cohomology rings, and shows that
it preserves Gerstenhaber brackets, so that it is in fact an
isomorphism of Gerstenhaber algebras. In order to do this, we first define
twisted versions of the Alexander-Whitney and Eilenberg-Zilber maps. 
Our proof then diverges from that of Le and Zhou to take advantage of the general construction
of Gerstenhaber brackets in \cite{NW} as applied to twisted tensor 
products specifically via our techniques from Section~\ref{Gbracket twisted}. 

In this section, all algebras and modules will be graded, 
and $\HH$ will denote graded Hochschild cohomology.
That is, if $X$, $Y$ are $A$-graded $R$-modules, we let $\Hom (X,Y) := \oplus_{a\in A}\Hom(X,Y)_a$
where $\Hom(X,Y)_a$ consists of all $R$-homomorphisms from $X$ to $Y$ such that $|f(x)|=|x| - a$
for all homogeneous $x\in X$. Graded Hochschild cohomology arises by applying $\Hom$ to the
appropriate resolution and taking homology. 

Choose a section of the quotient map of vector spaces 
from $R$ to $\overline{R}:= R/ k\cdot 1$
(respectively, from $S$ to $\overline{S}$), by which to identify $\overline{R}$
with a vector subspace of $R$ 
(respectively, $\overline{S}$ of $S$). 
Choose a compatible section of the map from $R\ot ^t S$ to $\overline{R\ot ^t S}$,
that is, identify $R\ot ^t S$ with the direct sum of its four subspaces
$\overline{R}\ot ^t \overline{S}$, $\overline{R}\ot ^t k$, $k\ot ^t \overline{S}$,
and $k\ot ^t k$, the sum of the first three of which is a subspace of 
$R\ot ^t S$ that we identify with $\overline{R\ot ^t S}$.
Let $\overline{\B} = \overline{\B}(R\ot ^t S)$ be the normalized bar resolution of $R\ot ^t S$ and let 
$$\K : = \Tot(\overline{\B}(R) \ot ^t \overline{\B} (S))$$ 
be the total complex of the twisted tensor product of the normalized bar resolutions of $R$ and of $S$.

We define a twisted Alexander-Whitney map $\AW_*^t: \overline{\mathbb{B}}(R \otimes^t S) 
\rightarrow \overline{\mathbb{B}}(R) \otimes^t \overline{\mathbb{B}}(S)$, generalizing that used in \cite{LZ} to the twisted tensor product. In degree 0, let
\begin{align*}
\AW_0^t: (R \otimes^t S) \otimes (R \otimes^t S) &\rightarrow (R \otimes R) \otimes^t (S \otimes S) \\
r \otimes^t s \otimes r' \otimes^t s' &\mapsto t^{<r'|s>} r \otimes r' \otimes^t s \otimes s',
\end{align*}
for all homogeneous $r,r'\in R$ and $s,s'\in S$. 

It is straightforward to 
 check that $\AW^t_0$ is an $(R \otimes^t S)^e$-module homomorphism with module action on $(R \otimes R) \otimes^t (S \otimes S)$ 
as given by (\ref{module-action}) 
and  module action on $(R \otimes^t S) \otimes (R \otimes^t S)$ given by multiplication on the left and right. 

To define $\AW^t_n$ for $n>0$, 
we use the identification of $\overline{R}$ as a subspace of $R$
(respectively, $\overline{S}$ of $S$, $\overline{R\ot ^t S}$ of $R\ot ^t S$) as
discussed at the beginning of this section, keeping in mind that if one of the
$r_i$ or $s_i$ in the expression below is in the field $k$, then the only possibly non-zero
summands in the  expression are those for which it is in the first or last 
tensor factor. 
Define the $(R\ot ^t S)^e$-module homomorphism as follows: 
\begin{align*}
\AW_n^t &: (R \otimes^t S) \otimes \overline{R \otimes^t S}^{\otimes n} \otimes (R \otimes^t S) \rightarrow \bigoplus_{d=0}^n (R \otimes \overline{R}^{\otimes n-d} \otimes R) \otimes^t (S \otimes \overline{S}^{\otimes d} \otimes S) \\
&1 \otimes^t 1 \otimes r_1 \otimes^t s_1 \otimes \cdots \otimes r_n \otimes^t s_n \otimes 1 \otimes^t 1 \\
&\mapsto \sum_{d=0}^n (-1)^{d(n-d)} t^* r_1 r_2 \!\cdots\! r_d \!\otimes\! r_{d+1} \!\otimes\! \cdots \!\otimes\! r_n \!\otimes\! 1 \!\otimes^t \!1 \!\otimes\! s_1 \!\otimes\! \cdots\! \otimes\! s_d \!\otimes\! s_{d+1} \!\cdots\! s_n , 
\end{align*}
where $t^* = t^{<r_1|1>} t^{<r_2|s_1 \otimes 1>} \cdots t^{<r_n|s_{n-1} \otimes \cdots \otimes s_1 \otimes 1>} t^{<1|s_n \otimes \cdots \otimes s_1 \otimes 1>}$, for all homogeneous $r_i\in R$, $s_j\in S$.  
It may be checked that $\AW_n^t$ does indeed define an $(R \otimes^t S)^e$-module homomorphism.  
Moreover, by a lengthy calculation, it can be seen that this choice of $\AW_n^t$ commutes with the differentials.

Similarly, we generalize the Eilenberg-Zilber chain map $\EZ_*^t: \overline{\mathbb{B}}(R) \otimes^t \overline{\mathbb{B}}(S) \rightarrow \overline{\mathbb{B}}(R \otimes^t S)$ as in \cite{LZ} to the twisted case. Let
\begin{align*}
\EZ_0^t: (R \otimes R) \otimes^t (S \otimes S) &\rightarrow (R \otimes^t S) \otimes (R \otimes^t S) \\
r \otimes r' \otimes ^t s \otimes s' &\mapsto t^{-<r'|s>} r \otimes^t s \otimes r' \otimes^t s' . 
\end{align*}

To define $\EZ_n^t$ for $n>0$, we need the following notation from \cite{LZ}:
$S_{n-d, d}$ is the set of $(n-d,d)$-shuffles, that is the permutations
$\xi$ in the symmetric group $S_n$ for which $\xi(1)<\xi(2)<\cdots <\xi(n-d)$ and
$\xi(n-d+1)<\xi(n-d+2)<\cdots <\xi(n)$.
For all $\xi\in S_{n-d,d}$, all $r_1,\ldots, r_{n-d}\in R$ and $s_1,\ldots,s_d\in S$, let
$$
  F_{\xi} (r_1\ot \cdots \ot r_{n-d}\ot ^t s_1\ot \cdots \ot s_d) =
     F(x_{\xi^{-1}(1)})\ot \cdots\ot F(x_{\xi^{-1}(n)})
$$
where $x_1=r_1,\ldots, x_{n-d}=r_{n-d}$, $x_{n-d+1}=s_1, \ldots, x_n=s_d$
and $F(r)=r\ot 1$, $F(s)= 1\ot s$ for $r\in R$, $s\in S$. 
We will also use the notation
\begin{eqnarray*}
  inv(\xi) &=& \{ (i,j) | 1 \leq i < j \leq n \mbox{ and } \xi(i) > \xi(j) \}, \\
   |\xi| & = &  | inv (\xi)| , \\
 t^{-inv(\xi)} &=& \prod_{(i,j) \in inv(\xi)} t^{-<r_i | s_{j-n+d}>}.
\end{eqnarray*}
Now define the $(R\ot^t S)^e$-module homomorphism: 
\begin{align*}
\EZ_n^t: &\bigoplus_{d=0}^n (R \otimes \overline{R}^{\otimes n-d} \otimes R) \otimes^t (S \otimes \overline{S}^{\otimes d} \otimes S) \rightarrow (R \otimes^t S) \otimes \overline{R \otimes^t S}^{\otimes n} \otimes (R \otimes^t S) \\
&1 \otimes r_1 \otimes ... \otimes r_{n-d} \otimes 1 \otimes^t 1 \otimes s_1 \otimes ... \otimes s_d \otimes 1 \\
&\mapsto 1 \!\otimes^t\! 1 \!\otimes\! \Big( \sum_{\xi \in S_{n-d, d}}\! (-1)^{|\xi|} t^{-inv(\xi)} F_{\xi} (r_1 \!\otimes\! \cdots \!\otimes\! r_{n-d}\! \otimes^t \!s_1\! \otimes\! \cdots\!\otimes\! s_d ) \Big) \!\otimes\! 1 
\!\otimes^t\! 1 . 
\end{align*}
As with $\AW_*^t$, it can be checked that $\EZ_*^t$ is in fact a chain map.

Now, in order to use the methods of \cite{NW} to describe the Gerstenhaber brackets on the Hochschild cohomology of $\Lambda = R\ot ^t S$, we must check
Conditions~\ref{conditions}(a)--(c) on $\mathbb{K}= 
\Tot(\overline{\mathbb{B}}(R) \otimes^t \overline{\mathbb{B}}(S))$:

(a) Let $\iota = \iota_{\B} \EZ_*^t$, 
where $\iota_{\mathbb{B}} : \overline{\mathbb{B}}(R \otimes^t S) \rightarrow \mathbb{B}(R \otimes^t S)$ is a choice of embedding compatible with our identifications of
$\overline{R}$ and $\overline{S}$ as subspaces of $R$ and $S$. 

(b) Let $\pi= \AW_*^t \pi_{\B}$, where $\pi_{\mathbb{B}} : \mathbb{B}(R \otimes^t S) \rightarrow \overline{\mathbb{B}}(R \otimes^t S)$ is the quotient map.  
We want to show that $\pi \iota := \AW_*^t \circ \EZ_*^t = \mathbf{1}_\mathbb{K}$. 
By their definitions, $\pi_{\B}\iota_{\B} = \mathbf{1}_{\B}$, and 
as in \cite{LZ}, we know that for the maps without the twist, 
$\AW_* \circ \EZ_* = \mathbf{1}_\mathbb{K}$.  Therefore, we need only check that the coefficients included
in relation to the twist cancel:   
\begin{align*}
& \AW_n^{t} \circ \EZ_n^{t} ((1 \otimes r_1 \otimes \cdots \otimes r_{n-d} \otimes 1) \otimes^t (1 \otimes s_1 \otimes \cdots \otimes s_d \otimes 1)) \\
& \quad = \AW_n^{t} (1 \otimes^t 1 \otimes \Big( \sum_{\xi \in S_{n-d, d}} (-1)^{|\xi|} t^{-inv(\xi)} F(x_{\xi^{-1}(1)}) \otimes \cdots \otimes F(x_{\xi^{-1}(n)} ) \Big) \otimes 1 \otimes^t 1),
\end{align*}
where for each $i$,  $x_{\xi^{-1}(i)}$ is either $r_{\xi^{-1}(i)}$ or $s_{\xi^{-1}(i)-(n-d)}$ depending on the value of $\xi^{-1}(i)$.  Then $F(x_{\xi^{-1}(i)})$ is either $ r_{\xi^{-1}(i)} \otimes 1$ or $1 \otimes s_{\xi^{-1}(i)-(n-d)}$.  
After applying $\AW^t_n$, the twisting coefficient for the term corresponding to $\xi$ is $t^{inv(\xi)-inv(\xi)}=1$.  
Therefore $\pi\iota = \mathbf{1}_{\K}$. 

(c) Consider $\Delta_\mathbb{K}: \mathbb{K} \rightarrow \mathbb{K} \otimes_{R \otimes^t S} \mathbb{K}$ defined by 
\begin{align*}
& (\Delta_\mathbb{K})_n (( 1 \otimes r_1 \otimes \cdots \otimes r_{n-d} \otimes 1 ) \otimes^t (1 \otimes s_1 \otimes \cdots \otimes s_d \otimes 1)) \\
& \quad = \sum_{j=0}^{n-d} \sum_{i=0}^d (-1)^{i(n-d-j)}t^{-<r_{j+1} \otimes \cdots \otimes r_{n-d} | s_1 \otimes \cdots \otimes s_i>} \\
& \quad \quad [(1 \otimes r_1 \otimes \cdots \otimes r_j \otimes 1) \otimes^t ( 1 \otimes s_1 \otimes \cdots \otimes s_i \otimes 1)] \otimes_{R \otimes^t S} \\
& \quad \quad [(1 \otimes r_{j+1} \otimes \cdots \otimes r_{n-d} \otimes 1) \otimes^t (1 \otimes s_{i+1} \otimes \cdots \otimes s_d \otimes 1)]
\end{align*}
for all homogeneous $r_l\in R$ and $s_m\in S$. 

Then 
\begin{align*}
& (\iota_{\B}\EZ_*^t \otimes_{R \otimes^t S} \iota_{\B}\EZ_*^t)\Delta_\mathbb{K}((1 \otimes r_1 \otimes \cdots \otimes r_{n-d} \otimes 1) \otimes^t (1 \otimes s_1 \otimes \cdots \otimes s_d \otimes 1)) \\
& \quad = (\iota_{\B}\EZ_*^t \otimes_{R \otimes^t S} \iota_{\B}\EZ_*^t) \Big(\sum_{j=0}^{n-d} \sum_{i=0}^d (-1)^{i(n-d-j)}t^{-<r_{j+1} \otimes \cdots \otimes r_{n-d} | s_1 \otimes \cdots \otimes s_i>} \\
&  \quad \quad \quad  \quad [(1 \otimes r_1 \otimes \cdots \otimes r_j \otimes 1) \otimes^t ( 1 \otimes s_1 \otimes \cdots \otimes s_i \otimes 1)] \otimes_{R \otimes^t S} \\
&  \quad  \quad \quad \quad   [(1 \otimes r_{j+1} \otimes \cdots \otimes r_{n-d} \otimes 1) \otimes^t (1 \otimes s_{i+1} \otimes \cdots \otimes s_d \otimes 1)] \Big)\\
&  \quad = \sum_{j=0}^{n-d} \sum_{i=0}^d (-1)^{i(n-d-j)}t^{-<r_{j+1} \otimes \cdots \otimes r_{n-d} | s_1 \otimes \cdots \otimes s_i>} \\
&  \quad  \quad [1 \otimes^t 1 \otimes \Big(\sum_{\xi' \in S_{j,i}} (-1)^{|\xi'|}t^{-inv(\xi')}  F_{\xi'}( r_1 \otimes \cdots \otimes r_j \otimes^t  s_1 \otimes \cdots \otimes s_i) \Big)\otimes 1 \otimes^t 1] \otimes_{R \otimes^t S} \\
&  \quad  \quad [1 \otimes^t 1 \otimes \Big( \sum_{\xi'' \in S_{n-d-j, d-i}} (-1)^{|\xi''|} t^{-inv(\xi'')}  F_{\xi''}( r_{j+1} \otimes \cdots \otimes r_{n-d} \otimes^t  s_{i+1} \otimes \cdots \otimes s_d) \Big) 1 \otimes^t 1]\\
\end{align*}
and 
\begin{align*}
&\Delta_\mathbb{B}(\iota_{\B}\EZ_*^t)((1 \otimes r_1 \otimes \cdots \otimes r_{n-d} \otimes 1) \otimes^t (1 \otimes s_1 \otimes \cdots \otimes s_d \otimes 1)) \\
&  \quad = \Delta_\mathbb{B}(1 \otimes^t 1 \otimes \Big( \sum_{\xi \in S_{n-d, d}} (-1)^{|\xi|} t^{-inv(\xi)} F_{\xi} (r_1 \otimes \cdots \otimes r_{n-d} \otimes^t s_1 \otimes \cdots\otimes s_d ) \Big) \otimes 1 \otimes^t 1) \\
& \quad = \Delta_\mathbb{B}(1 \otimes^t 1 \otimes \Big( \sum_{\xi \in S_{n-d, d}} (-1)^{|\xi|} t^{-inv(\xi)} F(x_{\xi^{-1}(1)}) \otimes \cdots \otimes F(x_{\xi^{-1}(n)} ) \Big) \otimes 1 \otimes^t 1) \\
& \quad = \sum_{i=0}^n \sum_{\xi \in S_{n-d, d}} (-1)^{|\xi|} t^{-inv(\xi)} [1 \otimes^t 1 \otimes  F(x_{\xi^{-1}(1)}) \otimes \cdots \otimes F(x_{\xi^{-1}(i)}) \otimes 1 \otimes^t 1 ] \\
& \hspace{2cm}  \otimes_{R \otimes^t S} [1 \otimes^t 1 \otimes  F(x_{\xi^{-1}(i+1)}) \otimes \cdots \otimes F(x_{\xi^{-1}(n)}) \otimes 1 \otimes^t 1 ], \\
\end{align*}
where for each $i$,  $x_{\xi^{-1}(i)}$ is either $r_{\xi^{-1}(i)}$ or $s_{\xi^{-1}(i)-(n-d)}$, depending on the value of $\xi^{-1}(i)$.  Now notice in both expressions, we are allowing all possible arrangements of $r$'s and $s$'s, thus, we need only check that the corresponding coefficients agree.  Given a fixed arrangement of the $r$'s and $s$'s determined by $\xi \in S_{n-d, d}$, we see that $(-1)^{|\xi|} t^{-inv(\xi)}$ is uniquely determined by the $s$'s and $r$'s that are moved past each other.  The corresponding term in the first expression has coefficient 
$$(-1)^{i(n-d-j)+|\xi'|+ |\xi''|}t^{-<r_{j+1} \otimes \cdots \otimes r_{n-d} | s_1 \otimes \cdots \otimes s_i> - inv(\xi') -inv(\xi'')},$$ 
for some $i$ and $j$, and  $\xi' \in S_{j,i}$, and $\xi'' \in S_{n-d-j, d-i}$, which is again uniquely determined by the $s$'s and $r$'s that are moved past each other.  Thus, because we are assuming we have the same arrangement of $r$'s and $s$'s, 
 $$(-1)^{i(n-d-j)+|\xi'|+ |\xi''|}t^{-<r_{j+1} \otimes \cdots \otimes 
r_{n-d} | s_1 \otimes \cdots \otimes s_i>-inv(\xi')-inv(\xi'')} = 
(-1)^{|\xi|} t^{-inv(\xi)}$$ 
when we view the term as coming from $\xi \in S_{n-d,d}$.  Therefore, 
$$(\iota_{\B}\EZ_*^t \otimes_{R \otimes^t S} \iota_{\B}\EZ_*^t)\Delta_\mathbb{K}
=\Delta_\mathbb{B}(\iota_{\B}\EZ_*^t).$$

We now have chain maps $\pi$, $\iota$, and $\Delta_{\K}$ satisfying 
Conditions~\ref{conditions}(a)--(c).
Therefore, we may use the formulas~(\ref{bracket-formula}) and (\ref{circ-formula})  to describe Gerstenhaber brackets, via a contracting homotopy $\phi$ of $F_{\K}$. 
By Lemma~\ref{phi}, we may choose 
$$
  \phi = (G_{\overline{\B}(R)} \ot F^{\scl}_{\overline{\B}(S)} + 
  (-1)^{*} F^{\scr}_{\overline{\B}(R)}\ot G_{\overline{\B}(S)})\sigma ,
$$
where $G_{\overline{\B}(R)}$, $G_{\overline{\B}(S)}$ are defined as in (\ref{equation-G}), 
$F^{\scl}_{\overline{\B}(S)}$, $ F^{\scr}_{\overline{\B}(R)}$ are defined just before
Lemma~\ref{lem:FK}, 
and $\sigma$ is the map from Lemma~\ref{sigma-signed}.
(See Lemma~\ref{phi} for the precise value of $(-1)^{*}$.) 

\subsection*{Tensor product of Gerstenhaber algebras
(\cite[Proposition-Definition~2.2]{LZ})} Let $H_1$ and $H_2$ 
be two Gerstenhaber algebras over $k$.
Let $f, f'\in H_1$ be elements of degrees $m,m'$, and let $g,g'\in H_2$ be of degrees $n,n'$,
respectively. 
Then $H_1\ot H_2$ is a Gerstenhaber algebra with  product 
$$
(f\ot g) \smile (f' \ot g')  := (-1)^{m'n} (f\smile f') \ot (g\smile g') 
$$
and bracket 
\begin{equation}\label{tensor}
\begin{aligned}
 & [ f\ot g , f'\ot g' ] \\
  & \hspace{.5cm} := (-1)^{(m + n -1) n'}  [f , f'] \ot (g\smile g')
    + (-1)^{m ( m' + n' -1)} (f\smile f') \ot [ g, g'].
\end{aligned}
\end{equation} 

\quad 

Returning to our graded algebras $R$ and $S$, 
the grading by groups $A$ and $B$ passes to cohomology (e.g.\ via the grading
on the bar resolutions of $R$ and $S$, respectively), so that the Hochschild
cohomologies of $R$ and $S$ are bigraded. 
Specifically, letting $n\in \N$ and  $a\in A$,
an element of $\HH^{n,a}(R)$ is represented by an $R^e$-homomorphism $f: R^{\ot (n+2)}\rightarrow R$
with $| f(r_0\ot \cdots \ot r_{n+1}) | = |r_0|+\cdots + |r_{n+1}| - a$
for all homogeneous $r_0,\ldots, r_{n+1}\in R$.
Similarly the Hochschild cohomology of $S$ is bigraded by $\N$ and $B$.
Let
\begin{equation}\label{AprimeBprime}
  A'= \bigcap_{b\in B} \Ker t^{\langle - | b \rangle} \ \  \mbox{ and } \ \ 
  B' = \bigcap_{a\in A} \Ker t^{\langle a | - \rangle} ,
\end{equation}
which are subgroups of $A$ and $B$, respectively. 
Let $H_1 = \HH^{*,A'}(R)$ and $H_2 = \HH^{*,B'}(S)$.
These are Gerstenhaber subalgebras of $\HH^*(R)$ and of $\HH^*(S)$,
respectively, as may be seen from formulas (\ref{cup-prod}), (\ref{bracket-formula}), 
(\ref{circ-formula}), and (\ref{equation-G}) with $\K=\B$.
Thus $H_1\ot H_2$ is a Gerstenhaber algebra with bracket defined by formula (\ref{tensor}). 
The following theorem states that the algebra isomorphism of \cite[Theorem~4.7]{BO} 
is in fact a Gerstenhaber algebra isomorphism. 

\begin{thm}\label{main-theorem}
Let $R$ and $S$ be $k$-algebras graded by abelian groups $A$ and $B$, 
respectively, at least one of which is finite dimensional, and let $t$ be a twisting. 
There is an isomorphism of Gerstenhaber algebras
$$
     \HH^{*, A'}(R) \ot   \HH^{*,B'}(S) 
         \cong \HH^{*,A'\oplus B'} (R \ot ^t S) ,
$$
where the Gerstenhaber bracket on the left side is given by (\ref{tensor}).
\end{thm}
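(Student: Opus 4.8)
The underlying linear (indeed algebra) isomorphism is already supplied by Bergh and Oppermann \cite[Theorem~4.7]{BO}, so the only thing left to prove is that the map $\Phi\colon \HH^{*,A'}(R)\ot\HH^{*,B'}(S)\to\HH^{*,A'\oplus B'}(R\ot^t S)$ preserves Gerstenhaber brackets. The plan is to compute both sides at the chain level: on the left using formula (\ref{tensor}), and on the right using the circle-product description (\ref{bracket-formula}) and (\ref{circ-formula}) evaluated on $\K=\Tot(\overline{\B}(R)\ot^t\overline{\B}(S))$ with the contracting homotopy $\phi$ of Lemma~\ref{phi}. Since Conditions~\ref{conditions}(a)--(c) have just been verified for $\K$, these formulas are legitimate, and the advantage of $\K$ is that $\phi$ is assembled from the homotopies $G_{\overline{\B}(R)}$ and $G_{\overline{\B}(S)}$ of the two factors, so the factorwise brackets will appear automatically.

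Both $\Phi$ and the bracket are $k$-linear, so it suffices to verify the identity on elementary tensors $f\ot g$ and $f'\ot g'$, where $f\in\HH^{m,A'}(R)$, $g\in\HH^{n,B'}(S)$, $f'\in\HH^{m',A'}(R)$, $g'\in\HH^{n',B'}(S)$. Under $\Phi$, such a tensor is represented by the cochain on $\K$ that agrees with $f\ot g$ (up to the canonical identification of the target with $R\ot^t S$) on the single summand $\overline{\B}(R)_m\ot\overline{\B}(S)_n$ of $\K_{m+n}$ and vanishes on all other summands; this support condition is what will select, out of the many shuffle terms produced by the diagonal, only those contributing to the final answer.

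First I would expand the circle product $(f\ot g)\circ(f'\ot g')=(f\ot g)\,\phi\,(\mathbf{1}_\K\ot(f'\ot g')\ot\mathbf{1}_\K)\,\Delta_\K^{(2)}$. Applying $\Delta_\K^{(2)}=(\Delta_\K\ot\mathbf{1}_\K)\Delta_\K$ splits each chain into an $R$-shuffle tensored with an $S$-shuffle, carrying the twist and shuffle signs recorded in the verification of Condition~\ref{conditions}(c); inserting $f'\ot g'$ in the middle factor and then applying $\phi=(G_{\overline{\B}(R)}\ot F^{\scl}_{\overline{\B}(S)}+(-1)^{*}F^{\scr}_{\overline{\B}(R)}\ot G_{\overline{\B}(S)})\sigma$ breaks the result into two pieces. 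In the first piece $G_{\overline{\B}(R)}$ performs the Gerstenhaber insertion of $f'$ into $f$ while $F^{\scl}_{\overline{\B}(S)}$ concatenates the two $S$-strands, so that after applying $f\ot g$ one obtains, up to sign and twist, $(f\circ f')\ot(g\smile g')$; the second piece symmetrically yields $(f\smile f')\ot(g\circ g')$. The decisive simplification is that every twisting coefficient $t^{\langle-\mid-\rangle}$ introduced by $\sigma$, by the module action (\ref{module-action}), and by the diagonal is evaluated on the outputs of $f,f',g,g'$, which lower $A$- and $B$-degrees only by elements of $A'$ and $B'$; since $t$ is trivial on $A'$ and on $B'$, all these coefficients telescope to $1$, exactly as in the verification of Condition~\ref{conditions}(c) above.

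Forming $[f\ot g,f'\ot g']=(f\ot g)\circ(f'\ot g')-(-1)^{(m+n-1)(m'+n'-1)}(f'\ot g')\circ(f\ot g)$ and collecting the $R$-insertion and $S$-insertion contributions then produces $[f,f']\ot(g\smile g')$ and $(f\smile f')\ot[g,g']$ respectively. The main obstacle, and the bulk of the work, will be the sign bookkeeping: one must track the Koszul signs of (\ref{Koszul-signs}), the exponent $(-1)^{*}$ of Lemma~\ref{phi}, the shuffle signs inside $\Delta_\K$, and the overall bracket sign, and check that they combine to precisely the prefactors $(-1)^{(m+n-1)n'}$ and $(-1)^{m(m'+n'-1)}$ appearing in (\ref{tensor}). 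Once these signs are matched, the chain-level identity $\Phi([f\ot g,f'\ot g'])=[\Phi(f\ot g),\Phi(f'\ot g')]$ holds, and together with the Bergh--Oppermann algebra isomorphism this shows that $\Phi$ is an isomorphism of Gerstenhaber algebras.
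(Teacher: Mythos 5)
Your overall strategy is exactly the paper's: take the Bergh--Oppermann algebra isomorphism as given, realize both sides on $\K=\Tot(\overline{\B}(R)\ot^t\overline{\B}(S))$, expand the circle product via $\Delta^{(2)}_{\K}$ and the homotopy $\phi=(G_{\overline{\B}(R)}\ot F^{\scl}_{\overline{\B}(S)}+(-1)^{*}F^{\scr}_{\overline{\B}(R)}\ot G_{\overline{\B}(S)})\sigma$, and observe that all twist coefficients cancel because $\alpha',\beta'$ shift degrees only by elements of $A'$, $B'$. That part is sound and matches the paper.

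There is, however, one genuine gap. You assert that the first piece of the circle product yields $(f\circ f')\ot(g\smile g')$ and that, after matching signs, the identity $\Phi([f\ot g,f'\ot g'])=[\Phi(f\ot g),\Phi(f'\ot g')]$ holds \emph{at the chain level}. Neither claim is correct as stated. When $F^{\scl}_{\overline{\B}(S)}$ concatenates the two $S$-strands, the output of $g'$ (which was applied to the \emph{left} portion of the $S$-input by the diagonal) sits to the left of the remaining input of $g$, so the first piece actually produces $(f\circ f')\ot(g'\smile g)$ --- the cup factor appears in the reversed order --- and symmetrically $(f'\ot g')\circ(f\ot g)$ produces $(f'\smile f)\ot(g'\circ g)$. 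Since the cup product is graded commutative only up to homotopy on cochains, reversing these orders requires Gerstenhaber's relation
$$
  \alpha\circ ( d^*\alpha ') - d^*(\alpha\circ\alpha ') + (-1)^{m'-1}
  (d^*\alpha)\circ \alpha ' = (-1)^{m'-1} \big(\alpha'\smile\alpha
   - (-1)^{mm'} \alpha \smile\alpha ' \big),
$$
which, applied to the cocycles $\alpha,\alpha'$ and $\beta,\beta'$, introduces the correction terms $d^*(\alpha\circ\alpha')$ and $d^*(\beta\circ\beta')$. One must then check that these two leftover terms combine, with the right signs, into the single coboundary $d^*\big((\alpha\circ\alpha')\ot(\beta'\circ\beta)\big)$ (using $d^*(\beta'\circ\beta)=(-1)^{(n-1)(n'-1)}d^*(\beta\circ\beta')$ for cocycles). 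The desired identity therefore holds only in cohomology, not on the nose, and this homotopy-commutativity step is a necessary ingredient of the argument rather than part of the ``sign bookkeeping'' you defer.
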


\begin{remarks}{\em 
(i) In the statement of the theorem, the tensor product of Gerstenhaber algebras
is understood to restrict to the usual tensor product of graded algebras, that is
the twisting  sends $((i,a'),(j,b'))$ to $(-1)^{ij}$. 
In \cite{BO}, this is given explicitly in the notation, while in \cite{LZ} it is not.
We will use the notation of \cite{LZ}. 

(ii) The reason this isomorphism is restricted to subalgebras corresponding to $A'$ and $B'$
is that the Hom, $\otimes$ interchange does not behave well with respect to graded bimodules
and degree shifts. In particular, if $\alpha\in\Hom(X,R)_a$ and $\beta\in\Hom(Y,S)_b$ for
some $R^e$-module $X$ and $S^e$-module $Y$, then $\alpha\ot\beta$ is generally {\em not}
an $(R\ot ^t S)^e$-module homomorphism from $X\ot ^t Y$ to $R\ot ^t S$, unless
$a\in A'$, $b\in B'$, since the module structure of $X\ot ^t Y$ involves the twist.
See Remark 4.2 and the proof of Theorem 4.7 in \cite{BO} for more details.
}\end{remarks}

\begin{proof}
Bergh and Oppermann \cite[Theorem 4.7]{BO} proved that there is such an
isomorphism of associative algebras. 
Their isomorphism may be realized explicitly at the chain level 
by using $\K = \Tot ( \overline{\B}(R)\ot ^t \overline{\B}(S))$ to express
elements on the right-hand side, via the Hom, $\ot$ interchange, as elements on the left-hand side. 
Our diagonal map $\Delta_{\K}$ may  be used to describe cup products. 
We need only show that this isomorphism also preserves Gerstenhaber brackets.
One approach would be to use the known algebra isomorphism combined with 
(\ref{graded-derivation}), showing that some mixed terms are 0.
Another approach would be to generalize the proof of Le and Zhou, which is an explicit
computation using the chain maps $\AW_*$ and $\EZ_*$.
We take yet another approach, using the theory we
have developed for twisted tensor products in Section~\ref{Gbracket twisted}
and in the first part of this section, which has the advantage of avoiding explicit computations
with the cumbersome chain maps $\AW_*$ and $\EZ_*$ themselves. 

Brackets on the right-hand side of the isomorphism will be described by using 
$\K = \Tot(\overline{\B}(R) \ot ^t \overline{\B}(S))$. 
We will use the chain maps $\iota$ and $\pi$ which are comparison morphisms between $\K$ and $\B = \B (R\ot ^t S)$, 
and the diagonal map $\Delta_{\K}$ which allows a construction of the
bracket operation on $\K$ via formulas~(\ref{bracket-formula}) and (\ref{circ-formula}).

Let $\alpha \in \HH^{m,A'}(R)$, $\alpha' \in \HH^{m',A'}(R)$, $\beta \in \HH^{n,B'}(S)$, and $\beta' \in \HH^{n',B'}(S)$. By abuse of notation, we also denote by $\alpha, \alpha', \beta, \beta'$ the  morphisms representing the corresponding cohomology elements. We will write $\alpha \ot \beta$ and $\alpha' \ot \beta'$ to represent elements in $\HH^{*,A'\oplus B'} (R \ot ^t S)$ via its algebra isomorphism to 
$\HH^{*, A'}(R) \ot  \HH^{*,B'}(S)$. 
We will need the finite dimension hypothesis in interchanging Hom and $\ot$ in
the tensor product of chain complexes, as we are working with bar resolutions. 
We will compute $[\alpha\ot\beta, \alpha'\ot\beta']$ as an element of $\HH^{*, A'\oplus B'}(R\ot ^t S)$
using (\ref{bracket-formula}) and (\ref{circ-formula}), 
and we will show that it agrees with the
Gerstenhaber bracket on a tensor product of two Gerstenhaber algebras as defined in
(\ref{tensor}). 

We will want to apply $[\alpha\ot \beta , \alpha ' \ot \beta ']$ to
elements of the form $$(1\ot r_1\ot \cdots \ot r_{m''} \ot 1) \ot ^t
   (1\ot s_1\ot \cdots \ot s_{n''}\ot 1)$$ 
where
$m'' + n'' = m+m' + n+ n'-1$
 and $r_1,\ldots, r_{m''}\in \overline{R}$, $s_1,\ldots, s_{n''}\in \overline{S}$. 
In the calculation below, we will see that $(-1)^*$ is $(-1)^{m''-m'}$, partway
through the calculation, as by that point we will already have applied
$\alpha'$ to some of the input, thus lowering its homological degree. 
There are signs associated to application of each of the maps
$(\mathbf{1} \ot (\alpha '\ot \beta ')\ot \mathbf{1})$ (the ``Koszul signs'' in (\ref{Koszul-signs})),
and $\sigma$, $G_{\overline{\B}(R)}$ and $G_{\overline{\B}(S)}$ 
(in their definitions in Lemma~\ref{sigma-signed} and in (\ref{equation-G})).
We start by computing a circle product: 
\begin{align*}
& (\alpha \ot\beta)\circ (\alpha'\ot\beta') ((1\ot r_1\ot \cdots \ot r_{m''} \ot 1) \ot ^t
   (1\ot s_1\ot \cdots \ot s_{n''}\ot 1)) \\
& = (\alpha\ot\beta) \left(G_{\overline{\B}(R)}\ot F^\scl_{\overline{\B}(S)} + 
   (-1)^{*} F^\scr_{\overline{\B}(R)}\ot G_{\overline{\B}(S)} \right) \sigma 
 (\mathbf{1} \ot (\alpha'\ot \beta') \ot \mathbf{1}) \Delta^{(2)}_{\mathbb K} \\
 & \hspace{2cm} ((1\ot r_1\ot \cdots \ot r_{m''} \ot 1) \ot ^t (1\ot s_1\ot \cdots \ot s_{n''}\ot 1)) \\ 
& = (\alpha\ot\beta) \left(G_{\overline{\B}(R)} \ot F^\scl_{\overline{\B}(S)} + 
    (-1)^{*} F^\scr_{\overline{\B}(R)}\ot G_{\overline{\B}(S)} \right) \sigma 
 (\mathbf{1} \ot (\alpha'\ot\beta') \ot \mathbf{1}) (\Delta_{\K}\ot \mathbf{1}) \\
 & \hspace{1.5cm} \Big( \sum_{j=0}^{m''} \sum_{i=0}^{n''} (-1)^{i(m''-j)} t^{-<r_{j+1}\ot\cdots\ot r_{m''} | s_1\ot \cdots\ot s_i>} \\
 & \hspace{2cm} 
   \left[(1\ot r_1\ot\cdots\ot r_j\ot 1) \ot^t (1\ot s_1\ot \cdots\ot s_i \ot 1) \right] \ot_{R\ot^tS} \\
 & \hspace{2cm}
   \left[(1\ot r_{j+1}\ot\cdots\ot r_{m''} \ot 1)\ot ^t (1\ot s_{i+1}\ot\cdots \ot s_{n''}\ot 1) \right] \Big) \\
&=(\alpha\ot \beta) \left(G_{\overline{\B}(R)} \ot F^\scl_{\overline{\B}(S)} + 
  (-1)^{*} F^\scr_{\overline{\B}(R)}\ot G_{\overline{\B}(S)} \right)
  \sigma (\mathbf{1} \ot (\alpha'\ot\beta')\ot \mathbf{1})\\
&\quad  \Big( \sum_{j=0}^{m''} \sum_{i=0}^{n''} \sum_{p=0}^i \sum_{l=0}^j 
  (-1)^{i(m''-j)}(-1)^{p (j-l)} t^{-<r_{j+1}\ot\cdots\ot r_{m''} | s_1\ot \cdots\ot s_i>} t^{-<r_{l+1}\ot\cdots\ot r_j | s_1\ot \cdots\ot s_p>} \\
 & \hspace{2cm} 
   \left[(1\ot r_1\ot\cdots \ot r_l \ot 1)\ot^t (1\ot s_1\ot \cdots\ot s_p\ot 1) \right] \ot _{R\ot^tS} \\
 & \hspace{2cm} 
   \left[(1\ot r_{l+1}\ot\cdots\ot r_j\ot 1)\ot^t (1\ot s_{p+1}\ot\cdots\ot s_i\ot 1) \right] \ot_{R\ot^t S} \\
& \hspace{2cm} 
   \left[(1\ot r_{j+1}\ot\cdots\ot r_{m''} \ot1)\ot^t (1\ot s_{i+1}\ot \cdots\ot s_{n''}\ot 1) \right] \Big). \\
\end{align*}
Now, in order to apply $(\mathbf{1}\ot (\alpha'\ot \beta')\ot \mathbf{1})$, we must have
$m'= j-l$, $n'=i-p$. The Koszul sign from (\ref{Koszul-signs}) is thus
$$
   (-1)^{(p+l)(m'+n')} = (-1)^{ (m'+n')(j-m' + i-n')},
$$
and the above becomes
\begin{align*}
& = (\alpha\ot\beta) \left(G_{\overline{\B}(R)} \ot F^\scl_{\overline{\B}(S)} + 
  (-1)^{*} F^\scr_{\overline{\B}(R)}\ot G_{\overline{\B}(S)} \right) \sigma \\
 &\quad  \Big( \sum_{j=m'}^{m''} \sum_{i=n'}^{n''}  (-1)^{i (m''-j)} (-1)^{(i-n')m'} 
    (-1)^{(m'+n')(j-m'+i-n')} \\
 &\hspace{1cm} t^{-<r_{j+1}\ot\cdots\ot r_{m''} | s_1\ot \cdots\ot s_i> - <r_{j-m'+1}\ot\cdots\ot r_j | s_1\ot \cdots\ot s_{i-n'}>} \\
&\hspace{1cm}
 \left[(1\ot r_1\ot\cdots\ot r_{j-m'}\ot 1)\ot ^t (1\ot s_1\ot \cdots\ot s_{i-n'}\ot 1) \right] \ot_{R\ot^tS}\\
   &\hspace{1cm} 
   \left[\alpha'(1\ot r_{j-m'+1}\ot \cdots\ot r_j\ot 1) \ot^t \beta'(1\ot s_{i-n'+1}\ot
   \cdots\ot s_i\ot 1) \right] \ot_{R\ot^tS} \\
&\hspace{1cm} 
   \left[(1\ot r_{j+1}\ot \cdots\ot r_{m''}\ot 1)\ot ^t (1\ot s_{i+1}\ot \cdots\ot s_{n''}\ot 1) \right] \Big). \\
\end{align*}
After applying the definition (\ref{module-action}) of the module action, 
and applying $\sigma$ (which comes with a sign of $(-1)^{(i-n')(m''-j)}$), 
the above becomes
\begin{align*}
& = (\alpha\ot\beta) \left(G_{\overline{\B}(R)} \ot F^\scl_{\overline{\B}(S)} + 
  (-1)^{*} F^\scr_{\overline{\B}(R)}\ot G_{\overline{\B}(S)} \right) \sigma \\
 &\quad  \Big( \sum_{j=m'}^{m''} \sum_{i=n'}^{n''}  (-1)^{i (m''-j)} (-1)^{(i-n')m'} 
    (-1)^{(m'+n')(j-m'+i-n')} \\
 &\hspace{.5cm} t^{-<r_{j+1}\ot\cdots\ot r_{m''} | s_1\ot \cdots\ot s_i> - <r_{j-m'+1}\ot\cdots\ot r_j | s_1\ot \cdots\ot s_{i-n'}>} t^{<\alpha'(1\ot r_{j-m'+1}\ot \cdots\ot r_j\ot 1) | s_1\ot \cdots\ot s_{i-n'}>} \\   
&\hspace{.5cm}
 [(1\ot r_1\ot\cdots\ot r_{j-m'}\ot \alpha'(1\ot r_{j-m'+1}\ot \cdots\ot r_j\ot 1)) \ot ^t \\
 &\hspace{.5cm} 
 (1\ot s_1\ot \cdots\ot s_{i-n'}\ot \beta'(1\ot s_{i-n'+1}\ot\cdots\ot s_i\ot 1)) ] \ot_{R\ot^tS}\\
&\hspace{.5cm}  
 \left[(1\ot r_{j+1}\ot \cdots\ot r_{m''}\ot 1)\ot ^t (1\ot s_{i+1}\ot \cdots\ot s_{n''}\ot 1) \right] \Big) \\
& = (\alpha\ot\beta) \left(G_{\overline{\B}(R)} \ot F^\scl_{\overline{\B}(S)} + 
  (-1)^{*} F^\scr_{\overline{\B}(R)}\ot G_{\overline{\B}(S)} \right) \\
 &\quad  \Big( \sum_{j=m'}^{m''} \sum_{i=n'}^{n''}  (-1)^{-n' (m''-j)} (-1)^{(i-n')m'} 
    (-1)^{(m'+n')(j-m'+i-n')} \\
&\hspace{.5cm} t^{-<r_{j+1}\ot\cdots\ot r_{m''} | s_1\ot \cdots\ot s_i> - <r_{j-m'+1}\ot\cdots\ot r_j | s_1\ot \cdots\ot s_{i-n'}> + <\alpha'(1\ot r_{j-m'+1}\ot \cdots\ot r_j\ot 1) | s_1\ot \cdots\ot s_{i-n'}>} \\
&\hspace{.5cm} t^{< r_{j+1}\ot \cdots\ot r_{m''} | s_1\ot \cdots\ot s_{i-n'}\ot \beta'(1\ot s_{i-n'+1}\ot\cdots\ot s_i\ot 1)>} \\     
&\hspace{.5cm}
 [(1\ot r_1\ot\cdots\ot r_{j-m'}\ot \alpha'(1\ot r_{j-m'+1}\ot \cdots\ot r_j\ot 1)) 
 \ot_R (1\ot r_{j+1}\ot \cdots\ot r_{m''}\ot 1)] \ot ^t\\
 &\hspace{.5cm} 
 [(1\ot s_1\ot \cdots\ot s_{i-n'}\ot \beta'(1\ot s_{i-n'+1}\ot \cdots\ot s_i\ot 1)) 
 \ot_S (1\ot s_{i+1}\ot \cdots\ot s_{n''}\ot 1)] \Big). \\
\end{align*}
For brevity, we denote by $t^*$ the twisting coefficient in the above equation. 
Now $\alpha' \in \HH^{m',A'}(R)$ and $\beta' \in \HH^{n',B'}(S)$, that is, $\alpha'$
and $\beta'$ have graded degrees in 
the kernel of the twist homomorphism, and it follows that 
\begin{align*}
t^{<\alpha'(1\ot r_{j-m'+1}\ot \cdots\ot r_j\ot 1) | s_1\ot \cdots\ot s_{i-n'}>} = t^{<r_{j-m'+1}\ot \cdots\ot r_j | s_1\ot \cdots\ot s_{i-n'}>} ,\\
t^{< r_{j+1}\ot \cdots\ot r_{m''} | \beta'(1\ot s_{i-n'+1}\ot\cdots\ot s_i\ot 1)>} = t^{< r_{j+1}\ot \cdots\ot r_{m''} | s_{i-n'+1}\ot\cdots\ot s_i>} .  
\end{align*} 
Thus, $t^* = 1$. Now we are ready to apply 
$G_{\overline{\B}(R)} \ot F^\scl_{\overline{\B}(S)} + 
(-1)^{*} F^\scr_{\overline{\B}(R)}\ot G_{\overline{\B}(S)}$,
and there are signs associated to each term. 
In order to apply $G_{\overline{\B}(R)}\ot F^\scl_{\overline{\B}(S)}$, 
we must have $i=n'$ for the map to be non-zero, and the sign
incurred is $(-1)^{j-m'}$. 
In order to apply $F^\scr_{\overline{\B}(R)}\ot G_{\overline{\B}(S)}$, 
we must have $j=m''$ for the map to be non-zero, and
the sign incurred is $(-1)^{i-n'}$; in addition, for this application,
we find that $(-1)^* = (-1)^{j-m'+m''-j}=(-1)^{m''-m'}=(-1)^m$ (as for this
term, $m''=m + m'$). The above expression becomes
\begin{align*}
&= (\alpha\ot\beta) \Big(
  \sum_{j=m'}^{m''}   (-1)^{-n' (m''-j)} 
    (-1)^{(m'+n')(j-m')}(-1)^{j-m'}\\
  & \hspace{.5cm} [1\ot r_1\ot\cdots\ot r_{j-m'} \ot \alpha'
   (1\ot r_{j-m'+1}\ot \cdots\ot r_j\ot 1)\ot r_{j+1}
  \ot\cdots\ot r_{m''} \ot 1] \ot^t \\
& \hspace{.5cm} [\beta'(1\ot s_1\ot\cdots\ot s_i\ot 1)\ot s_{i+1}\ot \cdots\ot s_{n''}\ot 1] \\
& \hspace{.5cm}+\sum_{i=n'}^{n''}     (-1)^{(i-n')m'} 
    (-1)^{(m'+n')(m+i-n')}(-1)^{i-n'}(-1)^{m} \\
&  \hspace{.5cm} [1\ot r_1\ot \cdots\ot r_{m} \ot \alpha'(1\ot r_{m+1}\ot\cdots\ot r_{m''})]
   \ot ^t \\
&\hspace{.5cm} [1\ot s_1\ot\cdots\ot s_{i-n'} \ot\beta'(1\ot s_{i-n'+1}
  \ot\cdots\ot s_i\ot 1)\ot s_{i+1}\ot \cdots\ot s_{n''}\ot 1] \Big) \\
& =   \sum_{j=m'}^{m''}   (-1)^{-n' (m''-j)} 
    (-1)^{(m'+n')(j-m')}(-1)^{j-m'}\\
  & \hspace{.5cm} \alpha(1\ot r_1\ot\cdots\ot r_{j-m'} \ot \alpha'
   (1\ot r_{j-m'+1}\ot \cdots\ot r_j\ot 1)\ot r_{j+1}
  \ot\cdots\ot r_{m''} \ot 1) \ot^t \\
& \hspace{.5cm}\beta'(1\ot s_1\ot\cdots\ot s_i\ot 1)
\beta( 1\ot s_{i+1}\ot \cdots\ot s_{n''}\ot 1)\\
& \hspace{.5cm}+\sum_{i=n'}^{n''}     (-1)^{(i-n')m'} 
    (-1)^{(m'+n')(m+i-n')}(-1)^{i-n'}(-1)^{m} \\
&  \hspace{.5cm}\alpha(1\ot r_1\ot \cdots\ot r_{m} \ot 1) 
\alpha'(1\ot r_{m+1}\ot\cdots\ot r_{m''})   \ot ^t \\
&\hspace{.5cm} \beta(1\ot s_1\ot\cdots\ot s_{i-n'} \ot\beta'(1\ot s_{i-n'+1}
  \ot\cdots\ot s_i\ot 1)\ot s_{i+1}\ot \cdots\ot s_{n''}\ot 1). \\
\end{align*}
We wish to rewrite the sums. The first sum involves $\alpha\circ \alpha'$, in
which the term indexed by $j$ has a sign $(-1)^{(m'-1)(j-m')}$.
The second sum involves $\beta\circ\beta'$, in which the term indexed by $i$
has a sign $(-1)^{(n'-1)(i-n')}$. Accommodating these signs and rewriting, the 
above is equal to 
$$
   (-1)^{n'(m-1)} (\alpha\circ\alpha')\ot (\beta ' \smile \beta)
 + (-1)^{m(m'+n'-1)}(\alpha\smile\alpha')\ot (\beta \circ \beta')$$
applied to the input. 
Similarly, 
\begin{align*} 
&(\alpha' \ot\beta')\circ (\alpha\ot\beta) \\
& \hspace{2cm}=  (-1)^{n(m'-1)} (\alpha'\circ\alpha)\ot (\beta  \smile \beta')
 + (-1)^{m'(m+n-1)}(\alpha'\smile\alpha)\ot (\beta' \circ \beta). \\
\end{align*} 
We will use the following relation from \cite[Theorem~7.3]{G} to reverse
the order of the cup product $\alpha ' \smile \alpha$ in the above
expression (and a similar relation for $\beta '\smile\beta$):
$$
  \alpha\circ ( d^*\alpha ') - d^*(\alpha\circ\alpha ') + (-1)^{m'-1}
  (d^*\alpha)\circ \alpha ' = (-1)^{m'-1} \big(\alpha'\smile\alpha 
   - (-1)^{mm'} \alpha \smile\alpha ' \big).
$$
Now, $\alpha$ and $\alpha'$ are cocycles, so the two outermost terms on the
left-hand side of the above equation are~0. 
Putting it all together, using this relation and  formula (\ref{bracket-formula}),  
we obtain the Gerstenhaber bracket: 
\begin{align*} 
&[\alpha \ot \beta , \alpha' \ot \beta' ] \\
& \hspace{1cm} = (\alpha \ot \beta) \circ (\alpha' \ot \beta') - (-1)^{(m+n-1)(m'+n'-1)} (\alpha' \ot \beta') \circ (\alpha \ot \beta) \\
& \hspace{1cm}= (-1)^{n'(m-1)} (\alpha\circ\alpha')\ot (\beta'  \smile \beta)
 + (-1)^{m(m'+n'-1)}(\alpha\smile\alpha')\ot (\beta \circ \beta') \\
& \hspace{1cm}\quad - (-1)^{(m+n-1)(m'+n'-1) + n(m'-1)} (\alpha'\circ\alpha)\ot (\beta  \smile \beta') \\
& \hspace{1cm}\quad - (-1)^{(m+n-1)(m'+n'-1) + m'(m+n-1)}(\alpha'\smile\alpha)\ot (\beta' \circ \beta) \\
&\hspace{1cm}=(-1)^{n'(m+n-1)} (\alpha\circ\alpha ')\ot (\beta\smile\beta') +
   (-1)^{mn'} (\alpha\circ\alpha')\otimes d^*(\beta\circ\beta ') \\
&\hspace{1cm}\quad + (-1)^{m(m'+n'-1)} (\alpha\smile\alpha')\ot (\beta\circ\beta') \\
&\hspace{1cm}\quad + (-1)^{m(m'+n'-1) + nn' -m'-n'} (\alpha'\circ\alpha) \ot
  (\beta\smile\beta') \\
&\hspace{1cm}\quad + (-1)^{(m+n-1)(n'-1) + mm' +1} (\alpha\smile\alpha ') \ot (\beta '
   \circ \beta) - (-1)^{(m+n-1)(n'-1) +m'} d^*(\alpha\circ\alpha ')\ot (\beta'\circ\beta) .
\end{align*}
We claim that the terms involving $d^*(\beta\circ\beta')$ and $d^*(\alpha\circ\alpha')$ sum to a boundary:
$$
   d^*((\alpha\circ\alpha')\ot (\beta '\circ\beta)) =
   d^*(\alpha\circ\alpha')\ot (\beta '\circ \beta) +
   (-1)^{m+m'-1} (\alpha\circ\alpha')\ot d^*(\beta'\circ\beta).
$$
Since $\beta$, $\beta'$ are cocycles, $d^*([\beta ,\beta'])=0$, that is,
$d^*(\beta'\circ\beta) = (-1)^{(n-1)(n'-1)} d^*(\beta\circ\beta ')$, which
implies
$$
 d^*((\alpha\circ\alpha')\ot (\beta'\circ\beta)) = d^*(\alpha\circ\alpha')\ot
   (\beta'\circ\beta) + (-1)^{m+m'-1 + (n-1)(n'-1)}
   (\alpha\circ\alpha')\ot d^*(\beta\circ\beta'),
$$
and this is $(-1)^{(m+n-1)(n'-1) + m'-1}$ times the sum of the two terms in our
previous expression involving $d^*(\beta\circ\beta')$, $d^*(\alpha\circ\alpha')$.
We now see that as elements in cohomology, 
\begin{align*}
&[\alpha \ot \beta , \alpha' \ot \beta' ] \\
& \hspace{1cm}= (-1)^{n'(m+n-1)} (\alpha\circ\alpha' - (-1)^{(m+n-1)(m'-1) + n(m'-1)} \alpha'\circ\alpha)\ot (\beta  \smile \beta') \\
& \hspace{1cm}\quad + (-1)^{m(m'+n'-1)} (\alpha\smile\alpha')\ot (\beta \circ \beta' - (-1)^{(n-1)(m'+n'-1) + m'(n-1)}\beta' \circ \beta) \\ 
& \hspace{1cm}= (-1)^{(m+n-1)n'}  [\alpha , \alpha'] \ot (\beta \smile \beta')
    + (-1)^{m (m' + n' -1)} (\alpha \smile \alpha') \ot [\beta , \beta'],
\end{align*}
which agrees with formula (\ref{tensor}). Thus we have proved 
 that the algebra isomorphism $$\HH^{*, A'}(R) \ot  \HH^{*,B'}(S) \cong \HH^{*,A'\oplus B'} (R \ot ^t S)$$ of Bergh and Oppermann~\cite{BO} 
also preserves Gerstenhaber brackets. Therefore, it is an isomorphism of Gerstenhaber algebras,
as claimed.   
\end{proof}

\begin{ex}
Many of the algebras $\Lambda_q$ of Sections~\ref{section:maps} and~\ref{sec:qci} 
provide nontrivial illustrations of Theorem~\ref{main-theorem}.
For example, if $q$ is a primitive $r$th root of unity, $r$ odd (as in~\ref{r-odd} above), then
$\HH^{*,A'\oplus B'}(\Lambda_q)$ is a significant part of $\HH^*(\Lambda_q)$. The generators that are in
this subalgebra are $x\epsilon_{1,0}^*$, $y\epsilon_{0,1}^*$, $\epsilon_{2r,0}^*$, and
$\epsilon_{0,2r}^*$ (since $(-q^{-1})^{2r}=1$). Brackets of pairs of these elements
may be computed via formula (\ref{tensor}), once brackets in $\HH^*(k[x]/(x^2))$ have been computed,
for example, by the techniques of \cite{NW} or otherwise.
Such computations yield the same results as in~\ref{r-odd} above with less work.
Some of the other choices of values of $q$ 
in Section~\ref{sec:qci} similarly yield nontrivial illustrations
of Theorem~\ref{main-theorem}. 
\end{ex}



\begin{thebibliography}{99}

\bibitem{BO} P. A. Bergh and S. Oppermann, \textit{Cohomology of twisted tensor products}, J. Algebra \textbf{320} (2008), 3327--3338. 

\bibitem{BGMS} R.-O. Buchweitz, E. L. Green, D. Madsen, and \O. Solberg, \textit{Finite Hochschild cohomology without finite global dimension}, Math. Research Letters \textbf{12} (2005), 805--816. 

\bibitem{BGSS} R.-O.\ Buchweitz, E.\ L.\ Green, N.\ Snashall, and \O.\ Solberg,
\textit{Multiplicative structures for Koszul algebras}, Q.\ J.\ Math.\ \textbf{59} (4)
(2008), 441--454. 

\bibitem{G} M.\ Gerstenhaber, \textit{The cohomology structure of an associative ring}, Ann.\ Math.\  \textbf{78} (1963), no. 2, 267--288. 

\bibitem{G2} M.\ Gerstenhaber, \textit{On the deformation of rings and algebras}, Ann.\ Math.\ \textbf{79} (1964), 59--103. 

\bibitem{LZ} J. Le and G. Zhou, \textit{On the Hochschild cohomology ring of tensor products of algebras}, J. Pure App. Algebra \textbf{218} (2014),  1463--1477. 

\bibitem{M} S.\ Mac Lane, \textit{Homology}, Springer-Verlag, 1995. 

\bibitem{N} C.\ Negron, \textit{The cup product on Hochschild cohomology for localizations
of filtered Koszul algebras}, arXiv:1304.0527. 

\bibitem{NW} C. Negron and S. Witherspoon, \textit{An alternate approach to the Lie bracket on Hochschild cohomology}, arXiv:1406.0036. 

\bibitem{R} I.\ H.\ Rose, \textit{On the cohomology theory for associative algebras},
Am.\ J.\ Math.\ \textbf{74} (1952), 531--546. 



\end{thebibliography}
\end{document}